\documentclass[12pt]{amsart}

%\usepackage{citeref}
%\textwidth 162truemm
\textheight 210truemm

\usepackage{amsmath,amssymb,txfonts}
\usepackage{amssymb}
\usepackage{amsxtra}
\usepackage{amsthm, color}
\usepackage{txfonts}
\usepackage{graphicx}
\usepackage{color}
\usepackage{times}

\usepackage[cp1252]{inputenc}

\usepackage{mathrsfs}
\usepackage{graphicx}

\usepackage[active]{srcltx}

\newcommand{\rmv}[1]{}

\numberwithin{equation}{section}

\newcommand{\HH}{\mathcal{H}}
\newcommand{\HK}{\mathcal{K}}

\newcommand{\HS}{\mathcal{S}}
\newcommand{\HB}{\mathcal{B}}

\newcommand{\D}{\mathbb{D}}

\newcommand{\C}{\mathbb{C}}

\newcommand{\T}{\mathbb{T}}

\newcommand{\Hol}{\text{ Hol}}

\theoremstyle{plain}
\newtheorem{theorem}{Theorem}[section]
\newtheorem{lemma}[theorem]{Lemma}
\newtheorem{remark}[theorem]{Remark}

\newtheorem{prop}[theorem]{Proposition}

\theoremstyle{definition}

\begin{document}
\thanks{S. Luo was supported by an AARMS postdoctoral fellowship, NSERC of Canada (\#20171864) and the NNSF of China (\#11701167).}
\date{}

\author{Shuaibing Luo}
\address{School of Mathematics, and Hunan Provincial Key Laboratory of Intelligent information processing and Applied Mathematics, Hunan University, Changsha, 410082, PR China}
\email{sluo@hnu.edu.cn}

\subjclass[2010]{30H05, 30H80}

\title{Corona theorem for the Dirichlet-type space}

\begin{abstract}
This paper utilizes Cauchy's transform and duality for the Dirichlet-type space $D(\mu)$ with positive superharmonic weight $U_\mu$ on the unit disk $\D$ to establish the corona theorem for the Dirichlet-type multiplier algebra $M\big(D(\mu)\big)$ that: if
\[\{f_1,...,f_n\}\subseteq M\big(D(\mu)\big)\quad\text{and}\quad \inf_{z\in\D}\sum_{j=1}^n|f_j(z)|>0
\]
then
\[
\exists\,\{g_1,...,g_n\}\subseteq M\big(D(\mu)\big)\quad\text{such\ that}\quad \sum_{j=1}^nf_jg_j=1,
\]
thereby generalizing Carleson's corona theorem for $M(H^2)=H^\infty$ in \cite{Ca62} and Xiao's corona theorem for $M(\mathscr{D})\subset H^\infty$ in \cite{Xi98} thanks to
\[
D(\mu)=\begin{cases} \text{Hardy\ space}\ H^2\quad &\text{as}\quad d\mu(z)=(1-|z|^2)\,dA(z)\ \ \forall\ z\in\D;\\
\text{Dirichlet\ space}\ \mathscr{D}\ &\text{as}\quad  d\mu(z)=|dz|\ \ \forall\ z\in\T=\partial{\D}.
\end{cases}
\]
\end{abstract}
\keywords{Corona problem; Dirichlet-type space; multiplier algebra; Cauchy duality; complete Nevanlinna-Pick space.}
\maketitle

%\tableofcontents

\section{Introduction}\label{s1}

\subsection{Holomorphic space $D(\mu)$}\label{s11}

Given a finite positive Borel measure $d\mu$ on the closed unit disk $\overline{\D}$ which is the union of the unit disk $\D$ and its boundary $\T$, let
\begin{equation*}
\label{e11}
U_\mu(z) = \int_{\D}\bigg(\log\left|\frac{1-\overline{w}z}{z-w}\right|^2\bigg)\frac{d\mu(w)}{1-|w|^2}+\int_{\T}\frac{1-|z|^2}{|1-\overline{w}z|^2}d\mu(w)\ \ \forall\ \ z\in \D.
\end{equation*}
This function $U_\mu$ is a positive superharmonic function on $\D$ -i.e.-
\begin{equation*}
\label{e12}
\partial\overline{\partial}U_\mu(z)\le 0\ \ \forall\ \ z\in\D
\end{equation*}
holds in the sense of distributions. If $dA$ and $H^2$ stand for the normalized area measure and the Hardy space on $\D$ respectively, then $D(\mu)$ is the Dirichlet space with the weight $U_\mu$, i.e., the class of all functions $f\in \text{Hol}(\D)$ satisfying
$$
\int_\D |f'(z)|^2 U_\mu(z)\,dA(z)<\infty.
$$
We have
$D(\mu)\subseteq H^2$
and the norm on $D(\mu)$ is defined by
\begin{equation}\label{e13}
\|f\|^2_{D(\mu)} = \|f\|_{H^2}^2 + \int_\D |f'(z)|^2 U_\mu(z)\,dA(z)<\infty.
\end{equation}
Dirichlet-type spaces $D(\mu)$ include the classical Hardy space $H^2$ and the Dirichlet space $\mathscr{D}$. When $d\mu(w)=(1-|w|^2)dA(w), w \in \D$, or a positive measure supported on a compact set of $\D$, then $D(\mu)$ equals $H^2$ with equivalent norms (\cite{Al93}). When $d\mu(w)=\frac{|dw|}{2\pi}, w \in \T$, we have $D(\mu) = \mathscr{D}$ (\cite{RS91}).

Dirichlet-type spaces are important model spaces for operators of Dirichlet-type (\cite[P70]{Al93}), in particular the cyclic analytic 2-isometries. Through the work of \cite{RS91} and \cite{Al93} we know that every Dirichlet-type space with a nonnegative superharmonic weight is of the form $D(\mu)$ for some finite nonnegative Borel measure $d\mu$ on $\overline{\D}$. Indeed, there is a one to one correspondence between the set of finite positive Borel measures $d\mu$ on $\overline{\D}$ and the collection of positive superharmonic functions $U_\mu$ on $\D$ (\cite{La72}). For the measures $d\mu$ supported on the unit circle $\T$ these spaces come from \cite{Ri91}. The general case has been considered in \cite{Al93}. The paper \cite{LR15} has an overview of the $D(\mu)$ spaces. Interestingly, the $D(\mu)$ spaces also include the important power weighted Dirichlet spaces $\mathscr{D}_\alpha (0<\alpha<1)$ defined by
\begin{equation*}
\label{e14}
\mathscr{D}_\alpha = \left\{f \in \Hol(\D): \int_\D |f'(z)|^2 (1-|z|^2)^{1-\alpha} dA(z) < \infty\right\}.
\end{equation*}
In fact, if
$$
d\mu(w)= -(1-|w|^2)\partial\overline{\partial} (1-|w|^2)^{1-\alpha} dA(w)\quad w\in {\D}
$$
then by \cite[Proposition IV.1.4]{Al93} one has
$
D(\mu) = \mathscr{D}_\alpha.
$
Even more interestingly, there is a connection between the $D(\mu)$ spaces and the local Dirichlet integral (cf. \cite{RS91, Al93}). More precisely, for $f \in H^2$ and $z \in \overline{\D}$ we define the local Dirichlet integral of $f$ at $z$ by
$$D_z(f) = \int_\T \left|\frac{f(w) - f(z)}{w-z}\right|^2 \frac{|dw|}{2\pi}.$$
If $f$ does not have a nontangential limit at $z \in \T$, then $D_z(f)$ is defined to be $\infty$. By \cite[Theorem IV.1.9]{Al93} we have
\begin{align*}
\int_\D |f'(z)|^2 U_\mu(z)\,dA(z)=\int_{\overline{\D}}D_z(f)\,d\mu(z).
\end{align*}
whence
$$D(\mu) = \left\{f \in H^2: \int_{\overline{\D}}D_z(f)\,d\mu(z) < \infty\right\}.$$
In particular, when $d\mu = \delta_\lambda, \lambda \in \overline{\D}, f \in H^2$, we have
\begin{align}\label{localdir}
D_\lambda(f) = \begin{cases} \int_\D |f'(z)|^2 \bigg(\log\left|\frac{1-\overline{z}\lambda}{\lambda-z}\right|^2\bigg) \frac{1}{1-|\lambda|^2}dA(z) \quad &\lambda \in \D;\\
\int_\D |f'(z)|^2\frac{1-|z|^2}{|1-\overline{z}\lambda|^2}dA(z) \quad &\lambda \in \T.
\end{cases}
\end{align}

\subsection{Corona principle for multiplier algebra $M\big(D(\mu)\big)$}\label{s12}

Given a Banach space $\HB$ of holomorphic functions on $\D$ let
$$M(\HB) = \big\{\phi \in \HB:\  \phi f \in \HB\ \ \forall f\ \in \HB\big\}$$
be the multiplier algebra of $\HB$. It is well-known (cf. \cite{Steg}) that if $H^\infty$ is the space of all bounded holomorphic functions on $\D$ equipped the supremum norm $\|\cdot\|_{H^\infty}$ then
$$
M(\HB)=\begin{cases}
H^\infty\quad&\text{as}\quad \HB=H^2;\\
H^\infty\cap\Big\{\phi:\ \int_{\cup_{k=1}^n S(I_k)}|\phi'|^2\,dA\lesssim \text{Cap}_\frac12\big(\cup_{k=1}^n I_k\big)\Big\} \quad&\text{as}\quad \HB=\mathscr{D},
\end{cases}
$$
where $\text{Cap}_\frac12\big(\cup_{k=1}^n I_k\big)$ is the $\mathcal{L}^2_{\frac12}$-Bessel capacity of the finite union of disjoint subarcs $\{I_k\}_{k=1}^n$ of $\T$ and $$
S(I_k)=\left\{z\in\D:\ \frac{z}{|z|}\in I_k\quad\&\quad 1-\frac{|I_k|}{2\pi}\le|z|<1\right\}
$$
is the Carleson box based on $I_k$ with the arclength $|I_k|$.
Furthermore, a straightforward esimate gives that $\phi\in M\big(D(\mu)\big)$ if and only if  $\phi\in H^\infty$ and $|\phi'|^2U_\mu dA$ is a $D(\mu)$-Carleson measure -i.e.-
$$
\int_{\D}|f(z)|^2|\phi'(z)|^2U_\mu(z)\,dA(z)\lesssim \|f\|^2_{D(\mu)}\quad\forall\quad f\in D(\mu).
$$
Consequently, $M\big(D(\mu)\big)$ is a Banach algebra under the norm
\begin{equation}
\label{e15}
\|\phi\|_{M\big(D(\mu)\big)}=\sup\Big\{\|\phi f\|_{D(\mu)}:\ \ f\in D(\mu)\ \ \&\ \ \|f\|_{D(\mu)}=1\Big\}.
\end{equation}
Thus, it is a natural matter to deal with the algebraic mutiplicative homomorphisms $\Phi\neq 0$ of $M\big(D(\mu)\big)$ onto $\mathbb C$:
$$
\Phi(fg)=\Phi(f)\Phi(g)\ \ \&\ \ \Phi(f+g)=\Phi(f)+\Phi(g)\ \ \forall\ \ f,g\in M\big(D(\mu)\big).
$$
Clearly, each homomorphism of $M\big(D(\mu)\big)$ is a continuous linear functional on $M\big(D(\mu)\big)$ with the norm \eqref{e15} at most one. So, if $\mathcal{M}_{M\big(D(\mu)\big)}$ stands for the class of all such homomorphisms of $M\big(D(\mu)\big)$, then $\mathcal{M}_{M\big(D(\mu)\big)}$ exists as the maximal ideal space of ${M\big(D(\mu)\big)}$. Since each $z\in\mathbb D$ induces a multiplicative linear functional on $M\big(D(\mu)\big)$:
$$
\Phi_z(f)=f(z)\quad\forall\quad f\in M\big(D(\mu)\big),
$$
under the Gelfand topology of $\mathcal{M}_{M\big(D(\mu)\big)}$ (cf. \cite[p.184]{Ga81}) we can readily get that $\D$ is homeomorphically embedded into $\mathcal{M}_{M\big(D(\mu)\big)}$. Moreover, we reveal the following corona principle:
\begin{theorem}
\label{t1} $\D$ is dense in $\mathcal{M}_{M\big(D(\mu)\big)}$ - equivalently - if
$$\{f_1,...,f_n\}\subseteq M\big(D(\mu)\big)\quad\text{and}\quad \inf_{z\in\D}\sum_{j=1}^n|f_j(z)|>0
$$
then
$$
\exists\,\{g_1,...,g_n\}\subseteq M\big(D(\mu)\big)\quad\text{such\ that}\quad \sum_{j=1}^nf_jg_j=1.
$$
\end{theorem}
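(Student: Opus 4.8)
\emph{Strategy of proof.} The plan is to run the Hörmander–Wolff $\bar\partial$-scheme adapted to the pair $\big(D(\mu),M(D(\mu))\big)$, using throughout the description $M\big(D(\mu)\big)=H^\infty\cap\{\phi:\ |\phi'|^2U_\mu\,dA\ \text{is a}\ D(\mu)\text{-Carleson measure}\}$ recalled above, together with the Cauchy-transform realization of the duality for $D(\mu)$. Rescaling, we may assume $\|f_j\|_{M(D(\mu))}\le 1$ and $\delta^2\le\sum_{j=1}^n|f_j(z)|^2$ on $\D$. Put $\phi_j=\overline{f_j}\big/\sum_k|f_k|^2\in L^\infty(\D)$, so that $\sum_j f_j\phi_j\equiv 1$ and $\sum_j f_j\,\bar\partial\phi_j\equiv 0$.

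By the Koszul/Wolff reduction it suffices to find, for each pair $(j,k)$, a function $\Gamma_{jk}\in L^\infty(\D)$ with $\Gamma_{jk}=-\Gamma_{kj}$ solving
\[
\bar\partial\Gamma_{jk}\ =\ \phi_k\,\bar\partial\phi_j-\phi_j\,\bar\partial\phi_k
\]
such that, moreover, $|\partial_z\Gamma_{jk}|^2\,U_\mu\,dA$ is a $D(\mu)$-Carleson measure. Indeed $g_j:=\phi_j-\sum_k f_k\Gamma_{jk}$ then has $\bar\partial g_j=0$, hence $g_j\in\Hol(\D)$; antisymmetry of $\Gamma_{jk}$ gives $\sum_j f_jg_j\equiv1$; $g_j$ is bounded; and since $g_j'=\partial_z\phi_j-\sum_k\big(f_k'\Gamma_{jk}+f_k\,\partial_z\Gamma_{jk}\big)$ while $D(\mu)$-Carleson measures are closed under addition and under multiplication by a bounded function, $|g_j'|^2U_\mu\,dA$ is a $D(\mu)$-Carleson measure once $|\partial_z\phi_j|^2U_\mu\,dA$ is — and the latter holds because $|\partial_z\phi_j|\lesssim_\delta\sum_k|f_k'|$ with each $|f_k'|^2U_\mu\,dA$ already $D(\mu)$-Carleson ($f_k\in M(D(\mu))$). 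Thus $g_j\in M\big(D(\mu)\big)$.

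For the $\bar\partial$-step, a short computation collapses the right-hand side to $h_{jk}\,d\bar z$, where $h_{jk}=\big(\overline{f_k}\,\overline{f_j'}-\overline{f_j}\,\overline{f_k'}\big)\big/\big(\sum_l|f_l|^2\big)^2$, so $|h_{jk}|\lesssim_\delta|f_j'|+|f_k'|$ and hence $|h_{jk}(z)|^2U_\mu(z)\,dA(z)$ — a fortiori $(1-|z|^2)|h_{jk}(z)|^2U_\mu(z)\,dA(z)$ — is a $D(\mu)$-Carleson measure; this is the only structural input carried over from the hypotheses. I would then write $\Gamma_{jk}$ as an explicit Wolff-type $\bar\partial$-primitive of $h_{jk}\,d\bar z$, i.e.\ a Cauchy-kernel integral of $h_{jk}$ with a holomorphic correction factor, and obtain the bound $\|\Gamma_{jk}\|_\infty\lesssim_\delta1$ from the Carleson-measure property of $|h_{jk}|^2U_\mu\,dA$ by the Green's-function/Schur-test estimate underlying Carleson's corona theorem.

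The crux, and where the Cauchy duality for $D(\mu)$ enters decisively, is the estimate
\[
\int_\D|F(z)|^2\,|\partial_z\Gamma_{jk}(z)|^2\,U_\mu(z)\,dA(z)\ \lesssim\ \|F\|_{D(\mu)}^2\qquad\big(F\in D(\mu)\big).
\]
Up to the correction term, $\partial_z\Gamma_{jk}$ is a Calderón–Zygmund (Beurling-type) transform of $h_{jk}$; after expanding the reproducing kernel of $D(\mu)$ and pairing through the Cauchy transform, the inequality reduces to a testing condition against the Cauchy kernels $(1-\overline{w}z)^{-1}$, which one discharges using the $D(\mu)$-Carleson bounds for the $f_i$ together with the complete Nevanlinna–Pick structure of the kernel of $D(\mu)$ (it supplies the uniform off-diagonal kernel control valid for all admissible $U_\mu$). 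I expect this to be the main obstacle: one must produce a \emph{single} bounded $\bar\partial$-primitive of $h_{jk}\,d\bar z$ whose $z$-derivative also obeys the $D(\mu)$-Carleson condition, and since $U_\mu$ is merely superharmonic — not a power of $1-|z|^2$ — no off-the-shelf weighted $\bar\partial$-estimate applies; it is the Cauchy-transform picture of the dual of $D(\mu)$ that makes the weight manageable and, as a byproduct, makes the scheme specialize to Carleson's theorem when $U_\mu$ is comparable to $1-|z|^2$ (the Hardy case) and to Xiao's theorem when $U_\mu\equiv1$ (the Dirichlet case).
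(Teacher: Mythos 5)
Your scheme is the classical H\"ormander--Wolff argument run directly at the multiplier level, and its crux --- the existence of an antisymmetric $\Gamma_{jk}\in L^\infty(\D)$ solving $\bar\partial\Gamma_{jk}=\phi_k\bar\partial\phi_j-\phi_j\bar\partial\phi_k$ with $\|\Gamma_{jk}\|_\infty\lesssim_\delta 1$ \emph{and} with $|\partial_z\Gamma_{jk}|^2U_\mu\,dA$ a $D(\mu)$-Carleson measure --- is precisely the step you do not prove. You describe it as ``a Cauchy-kernel integral of $h_{jk}$ with a holomorphic correction factor'' estimated ``by the Green's-function/Schur-test estimate underlying Carleson's corona theorem,'' but those estimates are exactly the ones that exploit the radial structure of the weight (powers of $1-|z|^2$, as in Carleson's, Tolokonnikov's and Xiao's proofs); for a general superharmonic $U_\mu$ attached to an arbitrary finite measure $\mu$ on $\overline{\D}$ there is no off-the-shelf bounded $\bar\partial$-primitive whose $z$-derivative obeys the $D(\mu)$-Carleson condition, and you give no construction. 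The appeal to ``the complete Nevanlinna--Pick structure of the kernel'' as supplying ``uniform off-diagonal kernel control'' is not a substitute for such a construction --- that is not what the CNP property gives you. So the proposal, as written, reduces the theorem to an unproved (and, for general $\mu$, genuinely hard) weighted $\bar\partial$-problem.

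The route actually taken here is different and sidesteps this obstacle. Because $D(\mu)$ has a complete Nevanlinna--Pick kernel (Shimorin), the Toeplitz corona theorem reduces the corona problem for $M(D(\mu))$ to the \emph{baby} corona problem: given $h\in D(\mu)$, produce $g_j\in D(\mu)$ (merely in the space, not in the multiplier algebra, and with no $L^\infty$ bound required) such that $\sum_j f_jg_j=h$ with norm control. One still uses the Koszul ansatz $g_j=\phi_jh+\sum_k(a_{jk}-a_{kj})f_k$ with $a_{jk}$ the Cauchy transform of $\phi_j\,\bar\partial\phi_k\,h$, but now only membership of $g_j$ in $D(\mu)$ must be checked, and this is done by pairing against polynomials in the Cauchy dual $E(\mu)$ (whose multiplier algebra is all of $H^\infty$) via Green's theorem; no bounded solution of a $\bar\partial$-equation and no Carleson-measure bound on $\partial_z$ of the correction terms is ever needed. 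If you want to salvage your approach you must either supply the missing weighted $\bar\partial$-estimate for general $U_\mu$, or insert the Toeplitz-corona reduction so that only $D(\mu)$-membership (testable by Cauchy duality) is required of the corrected solutions.
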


This result solves an interesting problem within the theory of bounded holomorphic functions and discovers the following facts:

\begin{itemize}
\item [$\rhd$] If $D(\mu)=H^2$, then Theorem \ref{t1} is the Carleson corona theorem in \cite[Theorem 5]{Ca62}. The articles \cite{DKSTW, Ga81, Sa09} are excellent references for the history of the original corona problem and its relatives.

\item [$\rhd$] If $D(\mu)=\mathscr{D}_{0<\alpha\le 1}$, then Theorem \ref{t1} is the Xiao corona theorem in \cite[Theorem 3.4]{Xi98} (see also \cite[P22]{DKSTW}) which was proved by solving a d-bar equation subject to a Carleson measure condition for $\mathscr{D}_{0<\alpha\le 1}$. An equivalent form of this corona theorem can be found in \cite{To91} and has been extended to a higher-dimensional case in \cite{CSW11}.

\item[$\rhd$] When $d\mu$ is a finite sum of atoms $\sum_{j=1}^kc_j\delta_{\zeta_j}$ with $c_j>0\ \&\ \zeta_j\in\T$, Theorem \ref{t1} is reduced to \cite[Theorem 3.1.9]{LarX} which was validated via the following observation
\begin{equation}
\label{e16}
M\Bigg(D\bigg(\sum_{j=1}^kc_j\delta_{\zeta_j}\bigg)\Bigg)=H^\infty\cap D\bigg(\sum_{j=1}^kc_j\delta_{\zeta_j}\bigg).
\end{equation}
In general $M(D(\mu))$ is strictly contained in $H^\infty\cap D(\mu)$.
\end{itemize}

Thanks to the work of Shimorin (\cite{Sh02}), we know that every Dirichlet-type space $D(\mu)$ is a Hilbert space of holomorphic functions whose reproducing kernel is a complete Nevanlinna-Pick kernel (\cite{AM00}). If a Hilbert space has a complete Nevanlinna-Pick kernel, then we call it a complete Nevanlinna-Pick space. Every complete Nevanlinna-Pick space can be embedded in the Drury-Arveson space $H^2_d$ for some cardinal $d$ (\cite[Theorem 8.2]{AM02}). It was shown recently in \cite{AHMR17} that there is a Salas space $\HS$ on $\D$ having a complete Nevanlinna-Pick kernel for which the corona theorem fails for $M(\HS)$. Thus it will be interesting to know for which complete Nevanlinna-Pick spaces the corona theorem holds.
When $\HH$ is a complete Nevanlinna-Pick space, then by the Toeplitz corona theorem (\cite{BTL97, AM02, Sa09}), the corona theorem for its multiplier algebra $M(\HH)$ is equivalent to the baby corona theorem for $\HH$.
%We note that every complete Nevanlinna-Pick space can be embedded in the Drury-Arveson space $H^2_d$ for some cardinal $d$ (\cite[Theorem 8.2]{AM02}).
The corona theorem for the Drury-Arveson space and the Besov spaces on the ball was proved in \cite{CSW11}. In this paper, we prove the corona theorem for the Dirichlet-type $D(\mu)$ spaces. Since we are dealing with general positive measures $\mu$ on $\overline{\D}$, the standard estimates in studying the corona problems are hard to obtain. In fact, the known cases of the corona theorem are about spaces where the needed estimates can be obtained by exploiting the radial symmetry of the domain. But the $D(\mu)$ spaces in general don't have that symmetry. The strategy in this paper is to use the Cauchy dual of $D(\mu)$ to study the corona theorem for $M(D(\mu))$, and a detailed study of the Cauchy duality is presented in \S \ref{s2a}.
%It turns out that one can also use the Cauchy dual method to study the multiplier algebras.
In Section 3 we will establish the corona theorem for $M(D(\mu))$.

The corona theorem for the $\mathscr{D}_{0<\alpha\le 1}$ spaces has been extended to infinitely many functions (\cite{Tr04, KT13}). We will pursue this study for the Dirichlet-type $D(\mu)$ spaces in a future work.

Throughout this paper, for two quantities $A_1$ and $A_2$, we use $A_1 \lesssim A_2$ to mean that there is a constant $C>0$ such that $A_1 \leqslant C A_2$, and write $A_1\approx A_2$ for $A_1 \lesssim A_2 \lesssim A_1$.

\section{Cauchy duality and multiplier algebra}\label{s2a}
\subsection{Cauchy duality}
Suppose $\HH$ is a reproducing kernel Hilbert space of some holomorphic functions on $\D$ and satisfies the five conditions as seen below:
\begin{align}\label{consfcd}
\begin{cases}
1\in\HH;\\
M_z \HH \subseteq \HH&\text{where}~~ M_z f = zf;\\
L_\lambda \HH \subseteq \HH&\text{where}~~ L_\lambda f = \frac{f - f(\lambda)}{z-\lambda};\\
\overline{\D}=\sigma(M_z)&\text{- i.e. - ${\D}$ is dense in the specturm}\ \sigma(M_z)\ \text{of}~~ M_z;\\
\overline{\mathcal P}=\HH&\text{- i.e. - complex polynomial class $\mathcal{P}$ is dense in}~~\HH.
\end{cases}
\end{align}
For each $f \in \HH$ we define the function
$$\phi(\lambda)=(Uf)(\lambda) = \left\langle f, \frac{1}{1-\overline{\lambda}z}\right\rangle_{\HH}$$
with $\langle\cdot,\cdot\rangle_\HH$ being the inner product on $\HH$, and introduce the space
$$\HK = U(\HH)\quad\text{with its norm}\quad
\|\phi\|_\HK = \|f\|_\HH.
$$
Then $\HK$ is a reproducing kernel Hilbert space enjoying (\ref{consfcd}) (cf. \cite[Proposition 5.2]{ARR98}). For any polynomial pair $(p,q) \in \HH\times\HK$ the pairing between $\HH$ and $\HK$ is given by
$$\Big\langle p, q\Big\rangle_{(\HH,\HK)} = \int_\T p(\zeta) \overline{q(\zeta)} \frac{|d\zeta|}{2\pi}.$$
Accordingly, $\HK$ is called the Cauchy dual of $\HH$. Also $\HH = U(\HK)$ is the Cauchy dual of $\HK$. The Cauchy dual of a reflexive Banach space is defined analogously. See \cite[Section 5]{ARR98} for a detailed discussion of the Cauchy duality.

Since $D(\mu)$ is a reproducing kernel Hilbert space of some holomorphic functions satisfying (\ref{consfcd}) (\cite{Al93}), it follows that $D(\mu)$ has a Cauchy dual. Let $$V_\mu(z) = \int_{\overline{\D}} \frac{1-|z|^2}{|1-\overline{\zeta}z|^2} d\mu(\zeta)\quad z\in\D.$$
Let
$$L^2_{a,\mu}=\left\{f \in \Hol(\D): \int_\D |f'(z)|^2 (1-|z|^2)^2 \big(V_\mu(z)\big)^{-1}\,dA(z) < \infty\right\}$$
with norm
$$\|f\|_{L^2_{a,\mu}}^2=|f(0)|^2+\int_\D |f'(z)|^2 (1-|z|^2)^2 \big(V_\mu(z)\big)^{-1}\,dA(z),$$
and let $E(\mu)$ be the closure of the polynomials in $L^2_{a,\mu}$. We will see that $L^2_{a,\mu}$ is a Hilbert space, and so $E(\mu)$ is also a Hilbert space. %The following result is contained in \cite{OF96, Zh08}.
\begin{lemma}\label{intestimate}
Let $s>-1, r, t \geq0$ and $r+t-s>2$. If $t < s+2 < r$, then
$$\int_\D \frac{(1-|z|^2)^s}{|1-\lambda\overline{z}|^r |1-\overline{\zeta}z|^t} dA(z) \lesssim \frac{1}{(1-|\lambda|^2)^{r-s-2}|1-\overline{\zeta}\lambda|^t}\ \ \forall\ \ (\lambda,\zeta)\in\mathbb D\times \overline{\mathbb D}.
$$
\end{lemma}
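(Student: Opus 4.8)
\emph{Proof proposal.} The plan is to view this as a two--singularity version of the classical Forelli--Rudin estimate and to reduce it to the one--variable case by splitting $\D$ according to whether $z$ is close to $\lambda$ or close to $\zeta$, measured against $|1-\overline{\zeta}\lambda|$. The ingredients I would assemble first are: (i) the one--variable estimate, valid for $s>-1$ and $\rho>s+2$,
\[
\int_\D\frac{(1-|z|^2)^s}{|1-\overline{\lambda}z|^{\rho}}\,dA(z)\approx\frac{1}{(1-|\lambda|^2)^{\rho-s-2}}\qquad(\lambda\in\D),
\]
which is classical; (ii) the elementary inequality $|z-\lambda|\le|1-\overline{\lambda}z|$ for $z,\lambda\in\D$ (from $|1-\overline{\lambda}z|^2-|z-\lambda|^2=(1-|z|^2)(1-|\lambda|^2)$), which yields $|1-\overline{\zeta}\lambda|\le|1-\overline{\zeta}z|+|1-\overline{\lambda}z|$ for $z,\lambda\in\D$, $\zeta\in\overline{\D}$; and (iii) the lower bound $|1-\overline{\zeta}\lambda|\ge 1-|\zeta||\lambda|\ge\tfrac12(1-|\lambda|^2)$.

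Next I would split $\D=S_1\cup S_2$, where $S_1=\{z\in\D:|1-\overline{\zeta}z|\ge\tfrac12|1-\overline{\zeta}\lambda|\}$ and $S_2=\D\setminus S_1$. On $S_1$, since $t\ge0$ we have $|1-\overline{\zeta}z|^{-t}\le 2^t|1-\overline{\zeta}\lambda|^{-t}$, so by (i) with $\rho=r>s+2$ the contribution of $S_1$ is
\[
\le\frac{2^t}{|1-\overline{\zeta}\lambda|^t}\int_\D\frac{(1-|z|^2)^s}{|1-\overline{\lambda}z|^r}\,dA(z)\lesssim\frac{1}{(1-|\lambda|^2)^{r-s-2}|1-\overline{\zeta}\lambda|^t},
\]
which is the desired bound.

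The heart of the matter is $S_2$, where $|1-\overline{\zeta}z|<\tfrac12|1-\overline{\zeta}\lambda|$. Here (ii) forces $|z-\lambda|>\tfrac12|1-\overline{\zeta}\lambda|$, so $|1-\overline{\lambda}z|\ge|z-\lambda|>\tfrac12|1-\overline{\zeta}\lambda|$ and $|1-\overline{\lambda}z|^{-r}\le 2^r|1-\overline{\zeta}\lambda|^{-r}$; hence the contribution of $S_2$ is at most $2^r|1-\overline{\zeta}\lambda|^{-r}$ times $\int_{\{z\in\D:\,|1-\overline{\zeta}z|<\frac12|1-\overline{\zeta}\lambda|\}}(1-|z|^2)^s|1-\overline{\zeta}z|^{-t}\,dA(z)$. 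For this I would prove the localized bound $\int_{\{z\in\D:|1-\overline{\zeta}z|<\delta\}}(1-|z|^2)^s|1-\overline{\zeta}z|^{-t}\,dA(z)\lesssim\delta^{\,s+2-t}$ (for $0<\delta\le2$, $\zeta\in\overline{\D}$), which is available exactly because $t<s+2$: decompose the region into the dyadic annuli $A_k=\{z\in\D:2^{-k-1}\delta\le|1-\overline{\zeta}z|<2^{-k}\delta\}$, $k\ge0$, use $1-|z|\le|1-\overline{\zeta}z|$ together with the standard Carleson--box estimate $\int_{\{z\in\D:|1-\overline{\zeta}z|<\delta'\}}(1-|z|^2)^s\,dA\lesssim(\delta')^{s+2}$ (valid since $s>-1$) to get $\int_{A_k}(1-|z|^2)^s|1-\overline{\zeta}z|^{-t}\,dA\lesssim(2^{-k}\delta)^{\,s+2-t}$, and sum the geometric series (convergent since $s+2-t>0$). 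Taking $\delta=\tfrac12|1-\overline{\zeta}\lambda|$, the $S_2$--contribution is $\lesssim|1-\overline{\zeta}\lambda|^{-r}|1-\overline{\zeta}\lambda|^{s+2-t}=|1-\overline{\zeta}\lambda|^{-(r-s-2)}|1-\overline{\zeta}\lambda|^{-t}$, and finally (iii) together with $r-s-2>0$ gives $|1-\overline{\zeta}\lambda|^{-(r-s-2)}\le 2^{\,r-s-2}(1-|\lambda|^2)^{-(r-s-2)}$; combining with the $S_1$ bound completes the proof. (The hypothesis $r+t-s>2$ is in fact automatic from $t<s+2<r$ and $t\ge0$.)

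I expect the only genuine obstacle to be the $S_2$ estimate: one must use the localized bound that \emph{gains} the factor $\delta^{\,s+2-t}$, not merely the finiteness of $\int_\D(1-|z|^2)^s|1-\overline{\zeta}z|^{-t}\,dA$, in order to absorb the $|1-\overline{\zeta}\lambda|^{-r}$ pulled out of $S_2$; and the concluding step, converting a power of $|1-\overline{\zeta}\lambda|$ into the same power of $1-|\lambda|^2$, is precisely where the hypothesis $r>s+2$ is used in an essential way.
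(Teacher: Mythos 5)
Your proof is correct, but it takes a different route from the paper, which disposes of the lemma in one line: for $\zeta\in\D$ the estimate is quoted from Ortega--F\`abrega and Zhao, and for $\zeta\in\T$ it is deduced by letting interior points tend to $\zeta$ and applying Fatou's lemma to the right-hand side (which is continuous in $\zeta$ up to the boundary). What you do instead is reprove the two-singularity Forelli--Rudin estimate from scratch, uniformly for all $\zeta\in\overline{\D}$ at once: the splitting of $\D$ according to whether $|1-\overline{\zeta}z|$ is larger or smaller than $\tfrac12|1-\overline{\zeta}\lambda|$, the reduction of the $S_1$ piece to the classical one-variable estimate, and the localized Carleson-box bound $\int_{\{|1-\overline{\zeta}z|<\delta\}}(1-|z|^2)^s|1-\overline{\zeta}z|^{-t}\,dA\lesssim\delta^{s+2-t}$ (where $t<s+2$ enters) are all sound, as is the final conversion of $|1-\overline{\zeta}\lambda|^{-(r-s-2)}$ into $(1-|\lambda|^2)^{-(r-s-2)}$ via $|1-\overline{\zeta}\lambda|\ge\tfrac12(1-|\lambda|^2)$ and $r>s+2$; your observation that $r+t-s>2$ is redundant under the stated hypotheses is also right. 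What your approach buys is self-containedness and a single argument covering interior and boundary $\zeta$ simultaneously, with constants visibly independent of $\zeta$; what the paper's approach buys is brevity, at the cost of relying on the cited references (which only treat $\zeta\in\D$) plus a limiting argument for the boundary case.
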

\begin{proof} When $\zeta \in \D$, it is proved in \cite{OF96, Zh08}. When $\zeta \in \T$, we apply Fatou's lemma to obtain the conclusion.
\end{proof}

\begin{lemma}\label{propeose} We have:
\begin{itemize}
\item [\rm (i)] $E(\mu)$ contains $H^2$ and is contained in the weighted Bergman space
$$
L^2_a\big((1-|z|^2)\,dA(z)\big) = \left\{f \in \Hol(\D): \int_\D |f(z)|^2 (1-|z|^2)\,dA(z) < \infty\right\};
$$
\item [\rm (ii)] $E(\mu)$ is a reproducing kernel Hilbert space;
\item[\rm (iii)] $E(\mu)$ is the Cauchy dual of $D(\mu)$;
\item[\rm (iv)] The multiplier algebra of $E(\mu)$ is $H^\infty$ - i.e. - $M\big(E(\mu)\big) = H^\infty.$
\end{itemize}
\end{lemma}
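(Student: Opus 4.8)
The plan is to take the four items in order: (i) and (ii) come from elementary size estimates on $V_\mu$, (iii) is the substantial point and is proved by making the Cauchy pairing explicit and comparing reproducing kernels, and (iv) follows from (iii) together with a Carleson-measure estimate. For (i) and (ii), note first that $1-|z|\le|1-\overline{\zeta}z|\le 2$ for $(z,\zeta)\in\D\times\overline{\D}$, so integrating against the finite measure $\mu$ gives
\[
\frac{\mu(\overline{\D})}{4}\,(1-|z|^2)\ \le\ V_\mu(z)\ \le\ \frac{4\mu(\overline{\D})}{1-|z|^2}\qquad(z\in\D),
\]
whence $(1-|z|^2)^3\lesssim (1-|z|^2)^2V_\mu(z)^{-1}\lesssim 1-|z|^2$ with constants depending only on $\mu(\overline{\D})$. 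Inserting these bounds into the Beta-integral identities $\int_\D|z|^{2n-2}(1-|z|^2)\,dA\approx(n+1)^{-2}$, $\int_\D|z|^{2n-2}(1-|z|^2)^3\,dA\approx(n+1)^{-4}$, $\int_\D|z|^{2n}(1-|z|^2)\,dA\approx(n+1)^{-2}$, applied to Taylor coefficients, yields bounded inclusions $H^2\hookrightarrow L^2_{a,\mu}\hookrightarrow L^2_a\big((1-|z|^2)\,dA\big)$. In particular every $\|\cdot\|_{L^2_{a,\mu}}$-Cauchy sequence is Cauchy in the Bergman space $L^2_a\big((1-|z|^2)\,dA\big)$, hence converges locally uniformly to a holomorphic function; Fatou's lemma puts the limit back in $L^2_{a,\mu}$ with control of the norm, so $L^2_{a,\mu}$ — and therefore its closed subspace $E(\mu)$ — is a Hilbert space. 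Since polynomials are dense in $H^2$ and $H^2\hookrightarrow L^2_{a,\mu}$ is bounded, $E(\mu)=\overline{\mathcal P}^{\,L^2_{a,\mu}}\supseteq H^2$, which with the second inclusion is (i); and point evaluations on $L^2_{a,\mu}$ are bounded because they factor through the Bergman space, so their restrictions to $E(\mu)$ are bounded, giving (ii).

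For (iii) the goal is to show that the Cauchy pairing $\langle p,q\rangle=\int_\T p\overline{q}\,\tfrac{|d\zeta|}{2\pi}$ realizes $E(\mu)$ as the dual of $D(\mu)$ up to equivalence of norms; since $U\big(D(\mu)\big)$ is, by the Cauchy duality theory of \cite[Section~5]{ARR98}, exactly the space of $q$ for which $p\mapsto\langle p,q\rangle$ is bounded on $D(\mu)$ (with the functional norm), this identifies $E(\mu)$ with the Cauchy dual as a space, with an equivalent norm. Using the identity $\|f\|_{H^2}^2=|f(0)|^2+\int_\D|f'|^2\log\tfrac1{|z|^2}\,dA$ and its polarization, the pairing becomes $\langle f,g\rangle=f(0)\overline{g(0)}+\int_\D f'\overline{g'}\log\tfrac1{|z|^2}\,dA$ while $\|f\|_{D(\mu)}^2=|f(0)|^2+\int_\D|f'|^2\big(\log\tfrac1{|z|^2}+U_\mu\big)\,dA$. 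Splitting the weight and applying Cauchy--Schwarz, boundedness and perfectness of the pairing on $D(\mu)\times E(\mu)$ reduce to showing that the abstract dual weight $\big(\log\tfrac1{|z|^2}\big)^2\big(\log\tfrac1{|z|^2}+U_\mu\big)^{-1}$ is, up to constants, interchangeable with $(1-|z|^2)^2V_\mu(z)^{-1}$ inside these weighted Dirichlet integrals; the contributions near $z=0$ are harmless since both weights are bounded there and $f'$ is locally dominated by the norm, so the real content is a comparison near $\partial\D$. I would carry this out at the level of reproducing kernels: one has $k_{U(D(\mu))}(z,\lambda)=\langle S_\lambda,S_z\rangle_{D(\mu)}=(1-\overline{\lambda}z)^{-1}+\overline{\lambda}z\int_\D\frac{U_\mu(w)\,dA(w)}{(1-\overline{\lambda}w)^2(1-\overline{w}z)^2}$ with $S_\lambda=(1-\overline{\lambda}z)^{-1}$, one produces the kernel of $E(\mu)$ from the weight $(1-|z|^2)^2V_\mu^{-1}$, and one matches the two using the Green/Poisson representations defining $U_\mu$ and $V_\mu$ together with the integral estimate of Lemma \ref{intestimate}. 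This kernel comparison — unavoidably an $\approx$ rather than an equality, and delicate because $\mu$ carries no radial symmetry so the two weights need not even be pointwise comparable — is the main obstacle.

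For (iv), the inclusion $M\big(E(\mu)\big)\subseteq H^\infty$ is immediate: $E(\mu)$ is a reproducing kernel Hilbert space containing the constants, so any multiplier $\phi=\phi\cdot 1$ lies in $E(\mu)\subseteq L^2_a\big((1-|z|^2)\,dA\big)$ and is holomorphic, and $M_\phi^*k_\lambda=\overline{\phi(\lambda)}k_\lambda$ forces $\|\phi\|_\infty\le\|M_\phi\|$. For $H^\infty\subseteq M\big(E(\mu)\big)$ take $\phi\in H^\infty$ and $g\in E(\mu)$ and write $(\phi g)'=\phi'g+\phi g'$; the term $\phi g'$ contributes at most $\|\phi\|_\infty^2\|g\|_{E(\mu)}^2$, so the crux is to bound $\int_\D|\phi'|^2|g|^2(1-|z|^2)^2V_\mu^{-1}\,dA$, i.e. to show that $|\phi'(z)|^2(1-|z|^2)^2V_\mu(z)^{-1}\,dA(z)$ is an $E(\mu)$-Carleson measure with constant $\lesssim\|\phi\|_\infty^2$. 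The crude Schwarz--Pick bound $|\phi'|^2(1-|z|^2)^2\lesssim\|\phi\|_\infty^2$ loses too much; instead one uses that $|\phi'(z)|^2(1-|z|^2)\,dA(z)$ is an $H^2$-Carleson measure (the $BMOA$ estimate for $H^\infty$) and absorbs the remaining factor $(1-|z|^2)V_\mu^{-1}\lesssim 1$, which is $\approx 1$ precisely on the boundary regions unseen by $\mu$ — where $E(\mu)$ locally coincides with $H^2$ — and is genuinely small where $\mu$ concentrates near $\partial\D$ — where $E(\mu)$ is larger than $H^2$; quantitatively this is handled through the reproducing-kernel description of $E(\mu)$-Carleson measures and Lemma \ref{intestimate}. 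Once $\|\phi g\|_{L^2_{a,\mu}}\lesssim\|\phi\|_\infty\|g\|_{E(\mu)}$ is known it holds in particular for the Cesàro means $\sigma_N\phi$, which are polynomials of sup-norm $\le\|\phi\|_\infty$, so $\sigma_N\phi\cdot g\in E(\mu)$; the family $\{\sigma_N\phi\cdot g\}_N$ is bounded in $E(\mu)$ and converges locally uniformly to $\phi g$, and since $E(\mu)$ is closed (hence weakly closed) this forces $\phi g\in E(\mu)$, completing (iv).
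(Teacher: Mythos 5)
Your treatment of (i) and (ii) matches the paper's: the two-sided bound $\tfrac{1-|z|^2}{4}\mu(\overline{\D})\le V_\mu(z)\le\tfrac{2\mu(\overline{\D})}{1-|z|^2}$ plus Fatou's lemma gives completeness and the chain $H^2\subseteq E(\mu)\subseteq L^2_a\big((1-|z|^2)\,dA\big)$, from which boundedness of point evaluations follows. But both substantive items are left with genuine gaps. For (iii) you yourself flag the decisive step --- matching the dual weight of $D(\mu)$ with $(1-|z|^2)^2V_\mu(z)^{-1}$ --- as ``the main obstacle'' and do not carry it out; worse, the route you propose (comparing the reproducing kernel of $U\big(D(\mu)\big)$ with that of $E(\mu)$ up to $\approx$) cannot work as stated, because pointwise two-sided comparability of kernel values does not imply that two Hilbert function spaces coincide with equivalent norms: one needs $cK_1-K_2$ and $cK_2-K_1$ to be positive definite, a much stronger statement. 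The paper avoids all of this. It first replaces the weight $U_\mu$ by $V_\mu$ in the $D(\mu)$-norm (using $\log\big|\tfrac{1-\overline{\zeta}z}{z-\zeta}\big|^2\approx\tfrac{(1-|z|^2)(1-|\zeta|^2)}{|1-\overline{\zeta}z|^2}$ under the Dirichlet integral), writes the Cauchy transform explicitly as $\phi(\lambda)=f(\lambda)+\lambda\int_\D f'(z)(1-\lambda\overline{z})^{-2}V_\mu(z)\,dA(z)$, and then bounds $\int_\D|\phi'|^2(1-|\lambda|^2)^2V_\mu(\lambda)^{-1}dA(\lambda)$ by $\|f\|_{D(\mu)}^2$ via Schur's test with test function $h(z)=(1-|z|^2)^{-1/2}$ and Lemma \ref{intestimate}. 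That concrete estimate is the content of (iii), and nothing in your sketch substitutes for it.

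For (iv), the inclusion $M\big(E(\mu)\big)\subseteq H^\infty$ is fine, but your argument for $H^\infty\subseteq M\big(E(\mu)\big)$ breaks at the key point: after invoking that $|\phi'|^2(1-|z|^2)\,dA$ is an $H^2$-Carleson measure you must apply it to $g\in E(\mu)$, and $E(\mu)$ contains $H^2$ (typically strictly), so $\|g\|_{H^2}$ may be infinite; the sentence claiming this is ``handled through the reproducing-kernel description of $E(\mu)$-Carleson measures'' is not an argument, since testing on kernels is in general only a necessary condition. The paper's proof is both simpler and complete: it shows $\|M_z\|\le1$ on $E(\mu)$ (equivalently $\|L\|\le1$ on $D(\mu)$, via the local Dirichlet identity $D_\lambda(g)=|Lg(\lambda)|^2+D_\lambda(Lg)$), hence $(1-\overline{w}z)K_w^{E(\mu)}(z)$ is positive definite, and then takes the Schur product with $\big(\|\phi\|_{M(H^2)}^2-\overline{\phi(w)}\phi(z)\big)(1-\overline{w}z)^{-1}$, which is positive definite for $\phi\in H^\infty$ --- or one simply invokes von Neumann's inequality. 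You should replace your Carleson-measure route by this contractivity argument.
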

\begin{proof}

(i) Note that
$$\frac{1-|z|^2}{4}\mu(\overline{\D}) \leq V_\mu(z) \leq \frac{2\mu(\overline{\D})}{1-|z|^2},$$
by using Fatou's lemma we conclude that $L^2_{a,\mu}$ and $E(\mu)$ are Hilbert spaces and
$$
H^2 \subseteq E(\mu) \subseteq L^2_{a,\mu}\subseteq L^2_a\big((1-|z|^2)\,dA(z)\big).
$$

(ii) This follows from (i).

(iii) First we look at the following observation. If $f \in H^2$, then
\begin{align*}
\int_\D |f'(\zeta)|^2 \frac{(1-|z|^2)(1-|\zeta|^2)}{|1-\overline{\zeta}z|^2} dA(\zeta) &\leq \int_\D |f'(\zeta)|^2 \bigg(\log\left|\frac{1-\overline{\zeta}z}{z-\zeta}\right|^2\bigg) dA(\zeta)\\
& \lesssim \int_\D |f'(\zeta)|^2 \frac{(1-|z|^2)(1-|\zeta|^2)}{|1-\overline{\zeta}z|^2} dA(\zeta),
\end{align*}
and hence
\begin{align*}
\int_\D D_\zeta(f)\,d\mu(\zeta) &= \int_\D |f'(\zeta)|^2 \int_\D \bigg(\log\left|\frac{1-\overline{\zeta}z}{z-\zeta}\right|^2\bigg) \frac{d\mu(z)}{1-|z|^2}dA(\zeta)\\
&\approx \int_\D |f'(\zeta)|^2 \int_\D \frac{1-|\zeta|^2}{|1-\overline{\zeta}z|^2} d\mu(z)\,dA(\zeta).
\end{align*}
It follows that $f \in D(\mu)$ if and only if
\begin{align*}
&\int_\D |f'(\zeta)|^2 \int_\D \frac{1-|\zeta|^2}{|1-\overline{\zeta}z|^2} d\mu(z)\,dA(\zeta) + \int_\D |f'(\zeta)|^2 \int_\T \frac{1-|\zeta|^2}{|1-\overline{\zeta}z|^2} d\mu(z)\,dA(\zeta)\\
&\quad= \int_\D |f'(\zeta)|^2 \int_{\overline{\D}} \frac{1-|\zeta|^2}{|1-\overline{\zeta}z|^2} d\mu(z)\,dA(\zeta) < \infty.
\end{align*}
Thus
$$D(\mu) = \left\{f \in \Hol(\D): \int_{{\D}} |f'(z)|^2 V_\mu(z)\,dA(z) < \infty\right\},$$
and an equivalent norm on $D(\mu)$ can be defined by
\begin{equation}\label{e14}
\|f\|^2_{D(\mu)} = \|f\|_{H^2}^2 + \int_\D |f'(z)|^2 V_\mu(z)\,dA(z).
\end{equation}
In the following we will use this equivalent norm (\ref{e14}) on $D(\mu)$.

By the definition of the Cauchy duality, we have that the Cauchy dual of $D(\mu)$ is
\begin{align*}
\HK & = \Bigg\{\phi:\ \phi(\lambda) = \Big\langle f, \frac{1}{1-\overline{\lambda}z}\Big\rangle_{D(\mu)}, f \in D(\mu) \Bigg\}\\
& = \Bigg\{\phi:\ \phi(\lambda) = f(\lambda) + \lambda\int_\D f'(z) {(1-\lambda \overline{z})^{-2}}V_\mu(z) dA(z), f \in D(\mu) \Bigg\}
\end{align*}
with norm $\|\phi\|_{\HK} = \|f\|_{D(\mu)}$.
Now we show $\HK = E(\mu)$.

Note that for $f\in H^2$ one has
\begin{align*}
\|f\|_{H^2}^2 &= |f(0)|^2 + 2\int_\D |f'(z)|^2 \log \frac{1}{|z|} dA(z)\\
& \approx |f(0)|^2 + \int_\D |f'(z)|^2 (1-|z|^2) dA(z).
\end{align*}
It is then clear that $E(\mu) \subseteq \HK$ under the Cauchy duality. For the reverse inclusion, if $\phi \in \HK$, then there is $f \in D(\mu)$ such that
$$\phi(\lambda) = f(\lambda) + \int_\D f'(z) \frac{\lambda}{(1-\lambda \overline{z})^2}V_\mu(z) dA(z),
$$
and hence
$$\phi'(\lambda) = f'(\lambda) + \int_\D f'(z) \frac{1+\lambda \overline{z}}{(1-\lambda \overline{z})^3}V_\mu(z) dA(z).$$
Since $f \in D(\mu)$, the above-proven (i) yields $f \in D(\mu) \subseteq H^2 \subseteq E(\mu).$
Suppose
$$g(\lambda) = \int_\D  \frac{|f'(z)|}{|1-\lambda \overline{z}|^3}V_\mu(z) dA(z).
$$
Then, it is enough to show
$$\int_\D |g(\lambda)|^2 (1-|\lambda|^2)^2 \big(V_\mu(\lambda)\big)^{-1}dA(\lambda) < \infty.$$
We will use Schur's Theorem (cf. \cite{Zhu07}) to achieve this. Let $h(z) = (1-|z|^2)^{-1/2}$.
By Lemma \ref{intestimate}, we have
\begin{align*}
\int_\D  \frac{h(z)}{|1-\lambda \overline{z}|^3}V_\mu(z) dA(z) & = \int_{\overline{\D}} \int_\D  \frac{(1-|z|^2)^{1/2}}{|1-\lambda \overline{z}|^3|1-\overline{\zeta}z|^2} dA(z)d\mu(\zeta)\\
&\lesssim \int_{\overline{\D}} \frac{1}{(1-|\lambda|^2)^{1/2}|1-\overline{\zeta}\lambda|^2}d\mu(\zeta)\\
& = (1-|\lambda|^2)^{-3/2} V_\mu(\lambda).
\end{align*}
Also
\begin{align*}
&\int_\D  \frac{1}{|1-\lambda \overline{z}|^3}(1-|\lambda|^2)^{-3/2} V_\mu(\lambda) (1-|\lambda|^2)^2 \big(V_\mu(\lambda)\big)^{-1} dA(\lambda)\\
& = \int_\D  \frac{(1-|\lambda|^2)^{1/2}}{|1-\lambda \overline{z}|^3} dA(\lambda)\\
& \lesssim (1-|z|^2)^{-1/2} = h(z).
\end{align*}
Thus
\begin{align*}
|g(\lambda)|^2& = \left|\int_\D  \frac{|f'(z)|}{|1-\lambda \overline{z}|^3}V_\mu(z) dA(z)\right|^2\\
& \lesssim \int_\D  \frac{|f'(z)|^2h(z)^{-1}}{|1-\lambda \overline{z}|^3}V_\mu(z) dA(z) \int_\D  \frac{h(z)}{|1-\lambda \overline{z}|^3}V_\mu(z) dA(z)\\
& \lesssim (1-|\lambda|^2)^{-3/2} V_\mu(\lambda) \int_\D  \frac{|f'(z)|^2h(z)^{-1}}{|1-\lambda \overline{z}|^3}V_\mu(z) dA(z),
\end{align*}
and Fubini's theorem derives
\begin{align*}
&\int_\D |g(\lambda)|^2 (1-|\lambda|^2)^2 \big(V_\mu(\lambda)\big)^{-1}dA(\lambda)\\
& \lesssim \int_\D |f'(z)|^2 V_\mu(z) dA(z)\\
& \leqslant \|f\|_{D(\mu)}^2 < \infty.
\end{align*}
This establishes the desired conclusion.

(iv) Suppose that $L$ is the backward shift operator:
$$L(f)(z) = \frac{f(z)-f(0)}{z}\quad\forall\quad f \in D(\mu).
$$
Then $M_z^*\big|_{E(\mu)} = L\big|_{D(\mu)}$.

First, let us show $
\|M_z\| \leq 1$ on $E(\mu).$
In fact, we are required to verify
$
\|L\| \leq 1$ on $D(\mu).$
According to \cite{RS91} and
$$
g(z) = g(0) + z(Lg)(z)\quad\forall\quad g \in D(\mu),
$$
we have that
$$D_\lambda(g) = |Lg(\lambda)|^2 + D_\lambda (Lg)\ \ \text{holds}~~~\mu-\text{a.e.}~~ \lambda \in \T.$$
Meanwhile, if $\lambda \in \D$, then
\begin{align*}
D_\lambda(g) &= \int_\T \left|\frac{zLg(z) - \lambda Lg(\lambda)}{z-\lambda}\right|^2 \frac{|dz|}{2\pi}\\
& = \int_\T \frac{|Lg(z)|^2 - |\lambda Lg(\lambda)|^2}{|1-\overline{\lambda}z|^2}\frac{|dz|}{2\pi}\\
& = \int_\T \frac{|Lg(z)|^2 - |Lg(\lambda)|^2}{|1-\overline{\lambda}z|^2}\frac{|dz|}{2\pi} + \int_\T \frac{|Lg(\lambda)|^2(1-|\lambda|^2)}{|1-\overline{\lambda}z|^2}\frac{|dz|}{2\pi}\\
& = D_\lambda (Lg) + |Lg(\lambda)|^2.
\end{align*}
Since $\|Lg\|_{H^2} \leq \|g\|_{H^2},
$
it follows that
$$\|L\| \leq 1\quad\text{on}\quad D(\mu)\quad\&\quad \|M_z\| \leq 1\quad\text{on}\quad E(\mu).
$$

Second, we are about to show
$M\big(E(\mu)\big) = H^\infty.$
Suppose $\HH$ is a reproducing kernel Hilbert space with kernel $K^\HH$. From \cite{AM02} or \cite{PR16}, we have that a function $\phi$ is a multiplier of $\HH$ with norm at most one if and only if
$$\big(1-\overline{\phi(w)}\phi(z)\big)K_w^{\HH}(z)$$
is positive definite.
Upon letting $K_w^{E(\mu)}(z)$ be the reproducing kernel of $E(\mu)$, we obtain that $$(1-\overline{w}z)K_w^{E(\mu)}(z)$$ is positive definite. Note that $\big({1-\overline{w}z}\big)^{-1}$ is the reproducing kernel of $H^2$. So, if $\phi \in M(H^2)$, then
$$\big(\|\phi\|_{M(H^2)}^2-{\overline{\phi(w)}\phi(z)\big)}\big({1-\overline{w}z}\big)^{-1}$$
is positive definite.
The Schur product theorem then implies that
$$\big(\|\phi\|_{M(H^2)}^2-\overline{\phi(w)}\phi(z)\big) K_w^{E(\mu)}(z)$$
is positive definite.
Thus $H^\infty = M(H^2) \subseteq M\big(E(\mu)\big).$
Conversely, let $\phi \in M\big(E(\mu)\big)$. Since $M_\phi^* K_w^{E(\mu)} = \overline{\phi(w)}K_w^{E(\mu)},$
we have $M\big(E(\mu)\big) \subseteq H^\infty=M(H^2),$
thereby reaching the identification.
\end{proof}
\begin{remark}\label{vonneumann}
Once we know $\|M_z\| \leq 1$ on $E(\mu)$, we can also apply von Neumann's inequality to conclude that $M\big(E(\mu)\big) = H^\infty$.
\end{remark}

\subsection{Multiplier algebra}
For a finite positive Borel measure $d\mu$ on $\overline{\D}$ let
$$HD(\mu) = \left\{f \in L^2(\T): \int_{\overline{\D}} D_\lambda\big(P(f)\big)\,d\mu(\lambda) < \infty\right\},$$
where $P(f)$ is the harmonic extension of $f$. $HD(\mu)$ is called the harmonic Dirichlet-type space. The norm on $HD(\mu)$ is defined by
$$\|f\|^2_{HD(\mu)}= \|f\|^2_{L^2(\T)} + \int_{\overline{\D}} D_\lambda\big(P(f)\big)\,d\mu(\lambda).$$
According to \cite[Lemma 4.1]{RS14} we have
$$D_\lambda(f + \overline{g}) = D_\lambda(f) + D_\lambda(\overline{g})\quad\forall\quad (f,g,\lambda)\in H^2\times H^2\times\overline{\D}
$$
whence
$$
\begin{cases}D(\mu) \perp \overline{D(\mu)_0};\\
HD(\mu) = D(\mu) + \overline{D(\mu)_0};\\
D(\mu)_0 = \big\{f \in D(\mu): f(0) = 0\big\}.
\end{cases}
$$
 Recall that
$$
\int_\D |f'(z)|^2 U_\mu (z)\,dA(z) = \int_{\overline{\D}} D_\lambda(f)\,d\mu(\lambda)\quad\forall\quad f\in H^2.
$$
So, if
$$\nabla f (z)= \left(\frac{\partial f}{\partial x}, \frac{\partial f}{\partial y}\right)\quad\forall\quad z=x+iy,
$$
then
$$
\begin{cases}HD(\mu) = \left\{f \in L^2(\T): \int_\D |\nabla P(f)(z)|^2 U_\mu (z)\,dA(z) < \infty\right\};\\
\|f\|_{HD(\mu)}^2 = \|f\|_{L^2(\T)}^2 + \frac{1}{2}\int_\D |\nabla P(f)(z)|^2 U_\mu (z)\,dA(z).
\end{cases}
$$
Let
$$
M\big(HD(\mu)\big) = \bigg\{\phi \in HD(\mu): f\in HD(\mu)\Rightarrow\phi f \in HD(\mu)\bigg\}
$$
be the multiplier algebra of $HD(\mu)$. Note that when $f \in HD(\mu), \phi \in M\big(HD(\mu)\big)$ in general we have $P(\phi f) (z)\neq P(\phi) (z) P(f)(z), z \in \D$.
We say that a positive Borel measure $\nu$ is a $D(\mu)$-Carleson measure if
$$\int_{\D}|f(z)|^2 d\nu(z)\lesssim \|f\|^2_{D(\mu)}\quad\forall\quad f\in D(\mu).$$

Recall that $$V_\mu(z) = \int_{\overline{\D}} \frac{1-|z|^2}{|1-\overline{\zeta}z|^2} d\mu(\zeta)\quad z\in\D,$$
and, (\ref{e13}) and (\ref{e14}) are equivalent norms on $D(\mu)$.

\begin{prop}\label{mulohimdm1} We have
\begin{enumerate}
\item $\phi\in M\big(D(\mu)\big)$ if and only if
	$\phi\in H^\infty$ and $|\phi'|^2U_\mu\,dA$ (or $|\phi'|^2V_\mu\,dA$) is a $D(\mu)$-Carleson measure.
\item $M\big(D(\mu)\big)\subseteq M\big(HD(\mu)\big)$.
\end{enumerate}
\end{prop}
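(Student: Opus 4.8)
The plan is to deduce (1) from the Leibniz rule $(\phi f)'=\phi'f+\phi f'$ and (2) from properties of the local Dirichlet integral $D_\lambda$, in both cases reducing to a $D(\mu)$-Carleson measure estimate.

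For (1): if $\phi\in M\big(D(\mu)\big)$, then $\phi=\phi\cdot1\in D(\mu)$ and, since $M_\phi^*K^{D(\mu)}_\lambda=\overline{\phi(\lambda)}K^{D(\mu)}_\lambda$, we get $\|\phi\|_{H^\infty}\le\|M_\phi\|$, so $\phi\in H^\infty$. For $f\in D(\mu)$, inserting $(\phi f)'=\phi'f+\phi f'$ into \eqref{e13} and applying $|a+b|^2\le2|a|^2+2|b|^2$ gives $\|\phi f\|_{D(\mu)}^2\lesssim\|\phi\|_\infty^2\|f\|_{D(\mu)}^2+\int_\D|\phi'|^2|f|^2U_\mu\,dA$, while reading the same identity backwards gives $\int_\D|\phi'|^2|f|^2U_\mu\,dA\lesssim\|\phi\|_\infty^2\|f\|_{D(\mu)}^2+\|\phi f\|_{D(\mu)}^2$; together these show that $\phi\in M\big(D(\mu)\big)$ exactly when $\phi\in H^\infty$ and $|\phi'|^2U_\mu\,dA$ is a $D(\mu)$-Carleson measure (for the reverse implication, $f=1$ also gives $\phi\in D(\mu)$). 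To exchange $U_\mu$ for $V_\mu$ I would use that $\log\bigl|\tfrac{1-\overline wz}{z-w}\bigr|^2\ge\tfrac{(1-|z|^2)(1-|w|^2)}{|1-\overline wz|^2}$, so the defining formula for $U_\mu$ yields $V_\mu\le U_\mu$ on $\D$ and hence one implication for free; for the other, expand $U_\mu$ in $\int_\D|f\phi'|^2U_\mu\,dA$, apply Tonelli, and reduce to the uniform-in-$w$ bound
\[
\int_\D|F(z)|^2\log\Bigl|\tfrac{1-\overline wz}{z-w}\Bigr|^2dA(z)\ \lesssim\ \int_\D|F(z)|^2\,\tfrac{(1-|z|^2)(1-|w|^2)}{|1-\overline wz|^2}\,dA(z)
\]
for $F$ holomorphic ($F=f\phi'$). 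The Möbius substitution $z=\tfrac{w-u}{1-\overline wu}$, absorbing the automorphism's derivative into $F$, turns this into the single inequality $\int_\D|G|^2\log\tfrac1{|u|^2}dA\lesssim\int_\D|G|^2(1-|u|^2)dA$, which follows from the sub-mean-value property of $|G|^2$; restoring the $w$-integral collapses the right-hand side to $\int_\D|f\phi'|^2V_\mu\,dA$.

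For (2): fix $\phi\in M\big(D(\mu)\big)$. Since $\phi\in D(\mu)$, $\tfrac12\int_\D|\nabla\phi|^2U_\mu\,dA=\int_\D|\phi'|^2U_\mu\,dA<\infty$, so $\phi\in HD(\mu)$. Given $f\in HD(\mu)=D(\mu)+\overline{D(\mu)_0}$, write $f=g+\overline h$ with $g\in D(\mu)$, $h\in D(\mu)_0$; then $\phi f=\phi g+\phi\overline h$ on $\T$ and $\phi g\in D(\mu)\subseteq HD(\mu)$, so it suffices to show $\phi\overline h\in HD(\mu)$ with $\|\phi\overline h\|_{HD(\mu)}\lesssim\|h\|_{D(\mu)}$. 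Writing $\phi\overline h\in L^2(\T)$ as $u_++\overline v$ with $u_+,v\in H^2$ and using $D_\lambda(u_++\overline v)=D_\lambda(u_+)+D_\lambda(v)$ from \cite[Lemma 4.1]{RS14} together with the two recorded formulas for $\|\cdot\|_{HD(\mu)}$, the problem reduces to $\int_{\overline{\D}}D_\lambda(\phi\overline h)\,d\mu(\lambda)\lesssim\|h\|_{D(\mu)}^2$. The key observation is that for every $\lambda\in\overline{\D}$ the value $P(\phi\overline h)(\lambda)$ minimizes $c\mapsto\int_\T\bigl|\tfrac{\phi(w)\overline{h(w)}-c}{w-\lambda}\bigr|^2\tfrac{|dw|}{2\pi}$, so substituting the competitor $c=\phi(\lambda)\overline{h(\lambda)}$ and splitting $\phi(w)\overline{h(w)}-\phi(\lambda)\overline{h(\lambda)}=\phi(w)\bigl(\overline{h(w)}-\overline{h(\lambda)}\bigr)+\overline{h(\lambda)}\bigl(\phi(w)-\phi(\lambda)\bigr)$ gives the product estimate
\[
D_\lambda(\phi\overline h)\ \le\ 2\|\phi\|_\infty^2\,D_\lambda(h)+2|h(\lambda)|^2\,D_\lambda(\phi)\qquad(\lambda\in\overline{\D}).
\]
Integrating against $\mu$, the first term is $\le2\|\phi\|_\infty^2\int_\D|h'|^2U_\mu\,dA\le2\|\phi\|_\infty^2\|h\|_{D(\mu)}^2$, and the second is under control once one knows that $D_\lambda(\phi)\,d\mu(\lambda)$ is a $D(\mu)$-Carleson measure. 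This last fact comes from the same splitting applied to $\phi f$: for $f\in D(\mu)$, $|f(\lambda)|D_\lambda(\phi)^{1/2}\le D_\lambda(\phi f)^{1/2}+\|\phi\|_\infty D_\lambda(f)^{1/2}$, hence $\int_{\overline{\D}}|f|^2D_\lambda(\phi)\,d\mu\lesssim\|\phi f\|_{D(\mu)}^2+\|\phi\|_\infty^2\|f\|_{D(\mu)}^2\lesssim\|f\|_{D(\mu)}^2$ because $\phi$ multiplies $D(\mu)$. Combining, $\phi\overline h\in HD(\mu)$ with the required bound, and then $\|\phi f\|_{HD(\mu)}\lesssim\|g\|_{D(\mu)}+\|h\|_{D(\mu)}\lesssim\|f\|_{HD(\mu)}$, so $\phi\in M\big(HD(\mu)\big)$.

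The main obstacles I anticipate are organizational: in (1), proving the logarithmic kernel estimate uniformly in $w$ (the Möbius change of variables handles it), and in (2), the fact that for $\lambda\in\D$ one has $P(\phi\overline h)(\lambda)\ne\phi(\lambda)\overline{h(\lambda)}$, so that the product estimate survives there only because $P(\phi\overline h)(\lambda)$ is a minimizer. The one genuinely new ingredient, that $D_\lambda(\phi)\,d\mu$ is a $D(\mu)$-Carleson measure whenever $\phi\in M\big(D(\mu)\big)$, is what makes part (2) go through.
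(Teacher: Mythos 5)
Your proposal is correct, and for part (2) it takes a genuinely different route from the paper. The paper proves $M\big(D(\mu)\big)\subseteq M\big(HD(\mu)\big)$ by duality: it writes $f=f_1+\overline{f_2}$, tests $\phi\overline{f_2}$ against polynomials $g=g_1+\overline{g_2}$ drawn from the Cauchy dual $E(\mu)$, and runs everything through Green's theorem, the Carleson estimate (\ref{mulfdmest}) and the multiplier estimate (\ref{mulfebest}) for $E(\mu)$; this fits the paper's global strategy, since the same Green's-theorem/$E(\mu)$ machinery reappears in the proof of the corona theorem. You instead work intrinsically with the local Dirichlet integral: the observation that $P(u)(\lambda)$ minimizes $c\mapsto\int_\T|\tfrac{u(w)-c}{w-\lambda}|^2\tfrac{|dw|}{2\pi}$ (a Poisson-kernel computation, since $|w-\lambda|^{-2}=(1-|\lambda|^2)^{-1}P_\lambda(w)$ on $\T$) legitimizes the competitor $c=\phi(\lambda)\overline{h(\lambda)}$, and the splitting then gives $D_\lambda\big(P(\phi\overline h)\big)\le 2\|\phi\|_\infty^2 D_\lambda(h)+2|h(\lambda)|^2D_\lambda(\phi)$; your auxiliary fact that $D_\lambda(\phi)\,d\mu(\lambda)$ is a $D(\mu)$-Carleson measure, obtained from $|f(\lambda)|D_\lambda(\phi)^{1/2}\le D_\lambda(\phi f)^{1/2}+\|\phi\|_\infty D_\lambda(f)^{1/2}$, is exactly the right substitute for the paper's duality estimates and is correct. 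Your argument is more elementary (no dual space, no Green's theorem, no limiting argument for non-$C^2$ integrands) and yields the quantitative bound $\|\phi\overline h\|_{HD(\mu)}\lesssim\|h\|_{D(\mu)}$ directly; the paper's version has the advantage of exercising the $E(\mu)$ toolkit needed later. The only point you should make explicit is the boundary case $\lambda\in\T$: there the competitor argument is replaced by the standard fact (from Richter--Sundberg) that if $\int_\T|\tfrac{u(w)-c}{w-\lambda}|^2\tfrac{|dw|}{2\pi}<\infty$ for some constant $c$, then $P(u)$ has nontangential limit $c$ at $\lambda$ and $D_\lambda(P(u))$ equals that integral, so the same bound holds $\mu$-a.e.\ on $\T$. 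For part (1) the paper offers no proof ("not hard to verify by the definition"); your Leibniz-rule argument is the intended one, and your M\"obius change of variables reproves the comparability of the logarithmic and Poisson-type kernels that the paper records at the start of the proof of Lemma \ref{propeose}(iii), so the $U_\mu$/$V_\mu$ interchange is sound.
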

\begin{proof}
(i) This is not hard to verify by the definition.

(ii) Let $\phi\in M\big(D(\mu)\big)$.
	 If $f\in HD(\mu)$, then there are $f_1, f_2 \in D(\mu)$ such that
	$$
	f_2(0) = 0\quad\&\quad f = f_1 +  \overline{f_2}.
	$$
	Since $\phi f_1 \in D(\mu)\subseteq HD(\mu)$,
	it is enough to show
	$
	\phi \overline{f_2} \in HD(\mu).
	$
	Recall that $E(\mu)$ is the Cauchy dual of $D(\mu)$ and the polynomials are dense in $E(\mu)$. Let $g = g_1 + \overline{g_2}$, where $g_1, g_2 \in E(\mu)$ are polynomials, and $g_2(0) = 0$.
	By Green's theorem
\begin{align}\label{green}
\int_\T u(z) \frac{|dz|}{2\pi} = u(0) + 2\int_\D \partial\overline{\partial} u(z) \bigg(\log\frac{1}{|z|}\bigg)\, dA(z)\quad\forall\quad u \in C^2(\overline{\D})
\end{align}
we have (the functions in the following are not $C^2$, but we can use a limiting argument to see that the following holds)
	\begin{align}\label{estimate}
	&\left|\int_\T \phi(z) \overline{f_2(z) g(z)} \frac{|dz|}{2\pi}\right|\notag\\
	&\ \leq \left|\int_\T \phi(z) \overline{f_2(z) g_1(z)} \frac{|dz|}{2\pi} \right| + \left|\int_\T \phi(z) \overline{f_2(z)} g_2(z) \frac{|dz|}{2\pi} \right|\\
	&\ = 2 \left|\int_\D \partial\overline{\partial} (\phi \overline{f_2 g_1})(z) \bigg(\log\frac{1}{|z|}\bigg) dA(z) \right| + 2\left|\int_\D \partial\overline{\partial} (\phi \overline{f_2} g_2)(z) \bigg(\log\frac{1}{|z|}\bigg) dA(z) \right|\notag\\
	&\ \lesssim\left|\int_\D \frac{\phi'(z)\big(\overline{f_2' g_1} + \overline{f_2 g_1'} \big)(z)} {\bigg(\log\frac{1}{|z|}\bigg)^{-1}}dA(z)\right|+\left|\int_\D \frac{\big(\phi' g_2 + \phi g_2'\big)(z) \overline{f_2'}(z)}{\bigg(\log\frac{1}{|z|}\bigg)^{-1}}dA(z)\right|.\notag
	\end{align}
Upon utilizing part (i) we obtain
\begin{align}\label{mulfdmest}
\int_\D |\phi'(z) f_2(z)|^2 V_\mu(z)\,dA(z) \lesssim \|M_{\phi}\|^2 \|f_2\|_{D(\mu)}^2\lesssim\|f_2\|_{D(\mu)}^2.
\end{align}
An application of Lemma \ref{propeose}(iv), together with the product rule $\phi' g = (\phi g)' - \phi g'$ yields
\begin{equation}\label{mulfebest}
\begin{cases}
\phi \in M\big(E(\mu)\big);\\
\int_\D |\phi'(z) g_i(z)|^2 (1-|z|^2)^2 \big(V_\mu(z)\big)^{-1}\,dA(z) \lesssim \|g_i\|_{E(\mu)}^2, i = 1, 2.
\end{cases}
\end{equation}
Meanwhile, it is not hard to check
$$\int_\D |\phi'(z) g_i(z)|^2 \bigg(\log\frac{1}{|z|}\bigg)^2 \frac{dA(z)}{V_\mu(z)} \lesssim \int_\D |\phi'(z) g_i(z)|^2 (1-|z|^2)^2 \frac{dA(z)}{V_\mu(z)}, i =1, 2.
$$
Thus applying Cauchy-Schwarz inequality we conclude from (\ref{estimate}) that
$$\left|\int_\T \phi(z) \overline{f_2(z)g(z)} \frac{|dz|}{2\pi}\right|  \lesssim \|f_2\|_{D(\mu)} \big(\|g_1\|_{E(\mu)} + \|g_2\|_{E(\mu)}\big).$$
Consequently, we achieve
	$$
	\phi \overline{f_2} \in HD(\mu)\quad\&\quad \|\phi \overline{f_2}\|_{HD(\mu)} \lesssim \|f_2\|_{D(\mu)}.
	$$
\end{proof}

\section{Corona principle}
Suppose $$f = (f_1, \ldots, f_n) \in M\big(D(\mu)\big)\times\cdots\times M\big(D(\mu)\big)=\Big(M\big(D(\mu)\big)\Big)^n,
$$
we denote the operator $$M_f: D(\mu) \times \cdots \times D(\mu) \rightarrow D(\mu)$$ by
$$
g=(g_1,...,g_n)\mapsto M_f g = \sum_{i=1}^n f_i g_i.
$$
Now Theorem \ref{t1} follows from the following corona principle.

\begin{theorem}\label{corthmfducd}
Let $$f=(f_1, \ldots, f_n) \in \Big(M\big(D(\mu)\big)\Big)^n.
$$
Then the following are equivalent:
\begin{itemize}
\item[\rm(i)] $M_f: M\big(D(\mu)\big) \times \cdots \times M\big(D(\mu)\big) \rightarrow M(D(\mu))$ is onto;
\item[\rm(ii)] $M_f: D(\mu) \times \cdots \times D(\mu) \rightarrow D(\mu)$ is onto;
\item[\rm(iii)] There exists a $\delta > 0$ such that
\begin{align}\label{corcondu}
|f(z)|^2 = \sum_{i=1}^n |f_i(z)|^2 \geq \delta^2 > 0\quad \forall\quad z \in \D.
\end{align}
\end{itemize}
\end{theorem}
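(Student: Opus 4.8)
The plan is to prove the equivalence by establishing the cycle (i)$\Rightarrow$(iii)$\Rightarrow$(ii)$\Rightarrow$(i), with essentially all of the work residing in the step (iii)$\Rightarrow$(ii) --- the \emph{baby corona theorem} for $D(\mu)$. The implications into (iii) are soft. Since $M\big(D(\mu)\big)\subseteq H^\infty$, the assumption $\sum_{i}f_ig_i=1$ with $g_i\in M\big(D(\mu)\big)$ gives, for each $z\in\D$,
\[
1=\Big|\sum_{i=1}^{n}f_i(z)g_i(z)\Big|\le|f(z)|\Big(\sum_{i=1}^{n}\|g_i\|_{H^\infty}^2\Big)^{1/2},
\]
so (i)$\Rightarrow$(iii) holds with $\delta=\big(\sum_i\|g_i\|_{H^\infty}^2\big)^{-1/2}$. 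For (ii)$\Rightarrow$(iii) I would apply the open mapping theorem to the bounded surjection $M_f\colon\big(D(\mu)\big)^n\to D(\mu)$ to get $C>0$ such that every $h\in D(\mu)$ has a preimage $g=(g_1,\dots,g_n)$ with $\|g\|\le C\|h\|$; taking $h=k_\lambda/\|k_\lambda\|$ with $k_\lambda$ the reproducing kernel of $D(\mu)$ at $\lambda\in\D$ and pairing $M_fg=h$ with $k_\lambda$, the reproducing property and Cauchy--Schwarz give
\[
\|k_\lambda\|=\Big|\sum_{i=1}^{n}f_i(\lambda)g_i(\lambda)\Big|\le|f(\lambda)|\Big(\sum_{i=1}^{n}|g_i(\lambda)|^2\Big)^{1/2}\le C\,|f(\lambda)|\,\|k_\lambda\|,
\]
whence $|f(\lambda)|\ge 1/C$ for all $\lambda\in\D$.

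For (ii)$\Rightarrow$(i) I would use that $D(\mu)$ is a complete Nevanlinna--Pick space, the theorem of Shimorin recalled in Section~\ref{s1}. The open mapping argument also upgrades (ii) to its quantitative version: there is $C>0$ so that every $h\in D(\mu)$ is of the form $\sum_i f_ig_i$ with $g_i\in D(\mu)$ and $\sum_i\|g_i\|_{D(\mu)}^2\le C^2\|h\|_{D(\mu)}^2$. The Toeplitz corona theorem for complete Nevanlinna--Pick spaces then turns this Hilbert-space solvability with a uniform bound into the existence of $g_1,\dots,g_n\in M\big(D(\mu)\big)$ with $\sum_i f_ig_i=1$, i.e.\ (i).

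The heart of the matter is (iii)$\Rightarrow$(ii), which I would attack through the Cauchy duality developed in Section~\ref{s2a}. Since $E(\mu)$ is the Cauchy dual of $D(\mu)$ (Lemma~\ref{propeose}(iii)), the standard fact that a bounded operator is onto exactly when its adjoint is bounded below shows that surjectivity of $M_f\colon\big(D(\mu)\big)^n\to D(\mu)$ is equivalent to
\[
\sum_{i=1}^{n}\|T_iq\|_{E(\mu)}^2\ \gtrsim\ \|q\|_{E(\mu)}^2\qquad\bigl(q\in E(\mu)\bigr),
\]
where $T_i$ is the Cauchy-adjoint of the multiplication operator $M_{f_i}$ on $D(\mu)$, which on polynomials acts as the co-analytic Toeplitz operator $p\mapsto P_+\bigl(\overline{f_i}\,p\bigr)$ ($P_+$ the Riesz projection onto $H^2$). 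The advantage of this reformulation is that $M\big(E(\mu)\big)=H^\infty$ (Lemma~\ref{propeose}(iv)): since $f_i\in M\big(D(\mu)\big)\subseteq H^\infty$ and $\inf_\D|f|\ge\delta$, Carleson's corona theorem yields $h_1,\dots,h_n\in H^\infty=M\big(E(\mu)\big)$ with $\sum_i f_ih_i=1$, and on polynomials the algebraic identity $\sum_i T_{\overline{h_i}}\,T_i=I$ holds (co-analytic Toeplitz operators compose multiplicatively, and $\sum_i h_if_i=1$), giving a one-sided inverse of the tuple $(T_i)$. Everything then reduces to showing that the operators $T_{\overline{h_i}}$, which are visibly bounded on $H^2$ and on $D(\mu)$ but not obviously on $E(\mu)$, do extend boundedly to $E(\mu)$. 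I would prove this by a Schur-test estimate of exactly the kind carried out in the proof of Lemma~\ref{propeose}(iii): one uses the integral formula for the $E(\mu)$-norm, the Forelli--Rudin-type estimate of Lemma~\ref{intestimate} with the weight $V_\mu$ and its reciprocal, the $D(\mu)$-Carleson measure bound for $|f_i'|^2V_\mu\,dA$ furnished by Proposition~\ref{mulohimdm1}(i), and the $E(\mu)$-Carleson bound for $|h_i'|^2(1-|z|^2)^2\bigl(V_\mu\bigr)^{-1}dA$ that is contained in the proof of Proposition~\ref{mulohimdm1}(ii) and holds because $h_i\in H^\infty=M\big(E(\mu)\big)$. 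Once $T_{\overline{h_i}}$ is bounded on $E(\mu)$, writing a polynomial $q$ as $\sum_i T_{\overline{h_i}}\bigl(T_iq\bigr)$ and applying the triangle and Cauchy--Schwarz inequalities gives the displayed lower bound and hence (ii); the Green's-formula identity used in the proof of Proposition~\ref{mulohimdm1}(ii) is the natural device for converting the boundary pairings that arise along the way into area integrals to which Lemma~\ref{intestimate} applies.

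The principal obstacle is concentrated in this last step. When $D(\mu)=H^2$ or $D(\mu)=\mathscr{D}$ the corona problem is solved via a $\overline{\partial}$-equation with explicit kernels that exploit the rotational symmetry of $\D$; for a general positive measure $\mu$ on $\overline{\D}$ that symmetry, and with it the usual pointwise kernel estimates, are unavailable. Trading $D(\mu)$ for its Cauchy dual $E(\mu)$ --- a Hilbert space squeezed between $H^2$ and a weighted Bergman space, whose entire multiplier algebra is the tractable $H^\infty$ --- and then controlling the resulting Toeplitz-type operators by Schur tests anchored on Lemma~\ref{intestimate} and on the two mutually dual Carleson-measure conditions relating $D(\mu)$ and $E(\mu)$ is, as far as I can see, the only feasible route. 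Making this ``the error term is a bounded operator'' bookkeeping rigorous, with all constants uniform in $\mu$, is the technical core of the argument.
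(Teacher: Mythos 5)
Your soft implications are fine, and your reduction of (iii)$\Rightarrow$(ii) to a lower bound for the Cauchy-adjoint column $\big(T_{\overline{f_i}}\big)_{i}$ on $E(\mu)$ is legitimate Banach-space duality. The gap is in the step you yourself identify as the crux: boundedness of $T_{\overline{h_i}}=P_+(\overline{h_i}\,\cdot\,)$ on $E(\mu)$ for the Carleson corona solutions $h_i\in H^\infty$. Under the very Cauchy pairing you are using, $T_{\overline{h}}$ acting on $E(\mu)$ is the Banach adjoint of $M_h$ acting on $D(\mu)$; hence $T_{\overline{h}}$ is bounded on $E(\mu)$ \emph{if and only if} $h\in M\big(D(\mu)\big)$. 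The fact $h_i\in H^\infty=M\big(E(\mu)\big)$ gives you, by the same duality, boundedness of $T_{\overline{h_i}}$ on $D(\mu)$ --- which you correctly call ``visible'' --- but says nothing about $E(\mu)$. You have the duality pointing the wrong way. Since the paper notes (and it is classical, e.g.\ for $\mathscr D$ via Stegenga's theorem) that $M\big(D(\mu)\big)$ is in general a proper subset of $H^\infty\cap D(\mu)$, no Schur test built only from Lemma~\ref{intestimate} and the hypothesis $h_i\in H^\infty$ can establish this boundedness: it would prove $H^\infty=M\big(D(\mu)\big)$. What you actually need is corona solutions $h_i$ lying in $M\big(D(\mu)\big)$ --- but that is precisely statement (i) of Theorem~\ref{corthmfducd}, so the argument is circular. (If instead you meant the $E(\mu)$-inner-product adjoints $M_{h_i}^*$, those are bounded on $E(\mu)$, but then $\sum_i M_{h_i}^*M_{f_i}^*=I$ only dualizes to surjectivity of $M_f$ on $\big(E(\mu)\big)^n$, which is the trivial $H^\infty$ corona statement, not (ii).)

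The paper avoids this trap by using the Cauchy duality in the opposite, verification-only direction. It constructs an explicit candidate solution by the Koszul/$\overline{\partial}$ scheme: with $\phi_j=\overline{f_j}/|f|^2$ and $a_{jk}$ the Cauchy transform of $\phi_j\overline{\partial}\phi_k h$, it sets $g_j=\phi_j h+\sum_k(a_{jk}-a_{kj})f_k$, which is holomorphic and satisfies $\sum_j f_jg_j=h$ by pure algebra. The analytic content is then entirely the estimate
\[
\Big|\int_\T g_j(z)\,\overline{\kappa(z)}\,\tfrac{|dz|}{2\pi}\Big|\lesssim C(n,\delta)\,\|h\|_{D(\mu)}\|\kappa\|_{E(\mu)}
\]
for polynomials $\kappa$, carried out via Green's theorem~(\ref{green}) and the two Carleson-measure bounds (\ref{mulfdmest}) and (\ref{mulfebest}); by Lemma~\ref{propeose}(iii) this pairing bound is exactly membership $g_j\in D(\mu)$. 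So the dual space $E(\mu)$ is used as a family of test functions against an explicitly constructed solution, not to dualize the surjectivity question into a left-invertibility problem for Toeplitz operators. If you want to keep your operator-theoretic framing, you would have to replace the appeal to Carleson's theorem by a construction of $h_i\in M\big(D(\mu)\big)$ with $\sum_i f_ih_i=1$, which lands you back at the paper's $\overline{\partial}$-argument.
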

\begin{proof} It is shown in \cite{Sh02} that $D(\mu)$ is a reproducing kernel Hilbert space with a complete Nevanlinna-Pick kernel. Thus by the Toeplitz corona theorem (\cite{BTL97, AM02, Sa09}), we have that (i) and (ii) are equivalent.

That (ii) implies (iii) is standard. As a matter of fact, let $\delta > 0$. If (ii) is valid, then for every $h \in D(\mu)$ there are $\{g_1,..., g_n\} \subseteq D(\mu)$
such that
$$
\begin{cases}
\sum_{j=1}^n f_jg_j= h;\\
\sum_{j=1}^n \|g_j\|_{D(\mu)}^2 \leq {\delta^{-2}}\|h\|_{D(\mu)}^2,
\end{cases}
$$
and hence
$$
M_fM_f^* - \delta^2 I\geq 0,
$$
where $I$ is the identity map. If $K_z$ is the reproducing kernel of $D(\mu)$, then
\begin{align*}
\delta^2\Big\langle K_z, K_z\Big\rangle_{D(\mu)} \leq \Big\langle M_fM_f^* K_z, K_z\Big\rangle_{D(\mu)} = |f(z)|^2 \Big\langle K_z, K_z\Big\rangle_{D(\mu)},
\end{align*}
and hence
$$
\left(\sum_{j=1}^n|f_j(z)|^2\right)^\frac12=|f(z)| \geq \delta,
$$
namely, (iii) holds.

Conversely, we show that (iii) implies (ii). Suppose that (iii) holds. Then the family of finitely many functions $\{f_1,..., f_n\}\subseteq M\big(D(\mu)\big)$
obeys (\ref{corcondu}). (ii) will follow from validating that if $h \in D(\mu)$ then there is a family of finitely many functions
$\{g_1,..., g_n\} \subseteq D(\mu)$ such that $\sum_{j=1}^n f_jg_j= h.$
Upon utilizing the normal family argument, we may assume that $f_1,..., f_n, h$ are holomorphic in $\overline{\D}$. Let
$$\phi_j(z) = \frac{\overline{f_j(z)}}{|f(z)|^2} ~~ \forall ~~ j\in\{1,...,n\}.$$
Then $\phi_j h$ are non-holomorphic solution of the equation $\sum_{j=1}^n f_j \phi_j h = h.$
To make them holomorphic, we define
\begin{align}\label{corsoldcah}
g_j(z) = \phi_j(z)h(z) + \sum_{k=1}^n \big(a_{jk}(z) - a_{kj}(z)\big) f_k(z),
\end{align}
where
$$a_{jk}(z) = \int_\D \frac{(\phi_j\overline{\partial}\phi_kh)(w)}{z-w} dA(w)$$
is a restatement of the fact that, considered as the Cauchy transform of $\phi_j\overline{\partial}\phi_kh$, $w^{-1}$ is a fundamental solution of the Cauchy-Riemann operator $\overline{\partial}$.
Since $\overline{\partial} a_{jk} = \phi_j\overline{\partial}\phi_kh,$
we have that $g_1,...,g_n$ are holomorphic and $\sum_{j=1}^n f_j(z) g_j(z)$ $= h(z).$
Suppose
$$\|M_{f_j}\| \leq 1\quad\forall\quad j = 1,..., n.
$$
Next we show that there is a constant $C(n,\delta)>0$ such that
$$g_j \in D(\mu)\quad\&\quad \|g_j\|_{D(\mu)} \leq C(n,\delta).
$$

First we prove
$$a_{jk}|_\T \in \overline{D(\mu)}.
$$
Let $b_{jk}(z) = P\big(a_{jk}|_\T\big)$
be the harmonic extension of $a_{jk}|_\T$. Then $b_{jk}$ is anti-holomorphic with
$b_{jk}(0)=0$. Lemma \ref{propeose}(iii) ensures that $E(\mu)$ is the Cauchy dual of $D(\mu)$. Let $p\in E(\mu)$ be a polynomial with $p(0) = 0$. Since polynomials are dense in $E(\mu)$, it is enough to show
$$
\left|\int_\T b_{jk}(z) p(z) \frac{|dz|}{2\pi}\right| \lesssim\|p\|_{E(\mu)}.
$$
By Green's theorem (\ref{green})
%$$
%\int_\T u(z) \frac{|dz|}{2\pi} = u(0) + 2\int_\D \partial\overline{\partial} u(z) \bigg(\log\frac{1}{|z|}\bigg)\, dA(z)\quad\forall\quad u \in C^2(\overline{\D})
%$$
we have
\begin{align}\label{estfaijdmu}
&\int_\T b_{jk}(z) p(z) \frac{|dz|}{2\pi}\nonumber\\
&\quad = 2\int_\D \partial\overline{\partial} (a_{jk}p)(z) \bigg(\log\frac{1}{|z|}\bigg) dA(z)\notag\\
&\quad= 2 \int_\D \partial(\phi_j\overline{\partial}\phi_k hp) (z)\bigg(\log\frac{1}{|z|}\bigg) dA(z)\\
&\quad= 2 \int_\D \Big(\partial(\phi_j\overline{\partial}\phi_k) h(z)p(z) + \phi_j\overline{\partial}\phi_k\big(h'(z) p(z) + h(z) p'(z)\big)\Big) \bigg(\log\frac{1}{|z|}\bigg)\,dA(z).\notag
\end{align}
Note that
$$\phi_j \overline{\partial}\phi_k = \frac{\overline{f_j}}{|f|^2}\left(\frac{\overline{f_k'}}{|f|^2} - \frac{\overline{f_k}\sum_{l=1}^n f_l \overline{f_l'}}{|f|^4}\right)
$$
and
\begin{align*}
\partial(\phi_j\overline{\partial}\phi_k)
&= \frac{-\overline{f_j}\sum_{l=1}^n f_l' \overline{f_l}}{|f|^4}\left(\frac{\overline{f_k'}}{|f|^2} - \frac{\overline{f_k}\sum_{l=1}^n f_l \overline{f_l'}}{|f|^4}\right)\\
&\ \ + \frac{\overline{f_j}}{|f|^2}\left( \frac{-\overline{f_k'}\sum_{l=1}^n f_l' \overline{f_l}}{|f|^4} + \frac{2\overline{f_k}\sum_{l=1}^n f_l \overline{f_l'} \sum_{l=1}^n f_l' \overline{f_l}}{|f|^6} -\frac{\overline{f_k}\sum_{l=1}^n f_l' \overline{f_l'}}{|f|^4}\right).
\end{align*}
So, it follows from (\ref{estfaijdmu}) that
\begin{align*}
&\left|\int_\T b_{jk}(z) p(z) \frac{|dz|}{4\pi}\right|\notag\\
&\ \ \leq 2\int_\D \Bigg(\frac{\sum_{l=1}^n |f_l'|}{|f|^2}\Bigg)\left(\frac{|f_k'|+ \sum_{l=1}^n |f_l'|}{|f|^2}\right)|h(z)p(z)|\left(\log\frac{1}{|z|}\right)dA(z)\\
&\ \ \ +2\int_{{\D}}\left( \frac{|f_k'|\sum_{l=1}^n|f_l'|+2(\sum_{l=1}^n|f_l'|)^2+\sum_{l=1}^n|f_l'|^2}{|f|^4}\right)|h(z)p(z)|\left(\log\frac1{|z|}\right)dA(z)\notag\\
&\ \ \ + 2\int_{{\D}}\Bigg(\frac{|f_k'|+\sum_{l=1}^n|f_l'|}{|f|^3}\Bigg)\big(|h'(z)p(z)| + |h(z)p'(z)|\big)\bigg(\log\frac{1}{|z|}\bigg) dA(z)\notag\\
&\lesssim (n^2 \delta^{-4} + n \delta^{-3}) \|h\|_{D(\mu)}\|p\|_{E(\mu)}\\
& \lesssim n^2 \delta^{-4}\|h\|_{D(\mu)}\|p\|_{E(\mu)},
\end{align*}
where in the second to the last line we used (\ref{mulfdmest}) and (\ref{mulfebest}).
Consequently,
$$
\|b_{jk}\|_{\overline{D(\mu)}} \lesssim n^2\delta^{-4} \|h\|_{D(\mu)}.
$$

Now we demonstrate that
$$g_j\in D(\mu)\quad\&\quad \|g_j\|_{D(\mu)} \lesssim C(n,\delta)\|h\|_{D(\mu)}
$$
holds for some positive constant $C(n,\delta)$ depending only on $n$ and $\delta$. Evidently, it is enough to show
$$\left|\int_\T g_j(z) \overline{\kappa(z)} \frac{|dz|}{2\pi}\right| \lesssim C(n,\delta) \|\kappa\|_{E(\mu)}\quad\forall\quad\text{polynomial }\kappa\in E(\mu).
$$
Observe that
\begin{align*}
&\left|\int_\T g_j(z) \overline{\kappa(z)} \frac{|dz|}{2\pi}\right|\\
&\ \ = \left|\int_\T \left(\phi_j(z) h(z)\overline{\kappa(z)} + \sum_{k=1}^n (a_{jk}(z) - a_{kj}(z)) f_k(z) \overline{\kappa(z)}\right)\frac{|dz|}{2\pi}\right|\\
&\ \  \leq \left|\int_\T \phi_j(z) h(z)\overline{\kappa(z)}\frac{|dz|}{2\pi}\right| + \left|\int_\T \sum_{k=1}^n (a_{jk}(z) - a_{kj}(z)) f_k(z) \overline{\kappa(z)}\frac{|dz|}{2\pi}\right|.
\end{align*}
So we are required to estimate the above two terms. For the first term we use Green's theorem to calculate
\begin{align*}
&\int_\T \phi_j(z) h(z)\overline{\kappa(z)}\frac{|dz|}{4\pi}-\phi_j(0) h(0)\overline{\kappa(0)}\\
&\ \ = 2\int_\D \Big(\partial\overline{\partial}(\phi_j h\overline{\kappa})(z)\Big) \bigg(\log\frac{1}{|z|}\bigg) dA(z)\\
&\ \ = 2\int_\D \Big(\big(\partial\overline{\partial}\phi_j(z)\big)h(z)\overline{\kappa}(z)+\big(\overline{\partial}\phi_j\big(z)\big) h'(z)\overline{\kappa}(z)\Big)\bigg(\log\frac{1}{|z|}\bigg) dA(z)\\
&\quad + 2\int_\D	
	\Big(\big(\partial\phi_j(z)\big) h(z)\overline{\kappa'(z)}+ \phi_j(z) h'(z) \overline{\kappa'(z)}\Big)\bigg(\log\frac{1}{|z|}\bigg) dA(z).
\end{align*}
Note that
$$
\begin{cases}
\phi_j = \frac{\overline{f_j}}{|f|^2},\quad
\overline{\partial}\phi_j = \left(\frac{\overline{f_j'}}{|f|^2} - \frac{\overline{f_j}\sum_{l=1}^n f_l \overline{f_l'}}{|f|^4}\right),\quad
\partial \phi_j = -\frac{\overline{f_j}\sum_{l=1}^n f_l'\overline{f_l}}{|f|^4},\\
\partial \overline{\partial}\phi_j = -\frac{\overline{f_j'}\sum_{l=1}^n f_l'\overline{f_l}}{|f|^4} + \frac{\overline{f_j}\sum_{l=1}^n f_l \overline{f_l'} \sum_{l=1}^n f_l'\overline{f_l}}{|f|^6} - \frac{\overline{f_j}\sum_{l=1}^n f_l' \overline{f_l'}}{|f|^4}.
\end{cases}
$$
Thus, using (\ref{mulfdmest}) and (\ref{mulfebest}) we obtain
\begin{align*}
&\left|\int_\T \phi_j(z) h(z)\overline{k(z)}\frac{|dz|}{2\pi}\right|\\
& \le\delta^{-1}\|h\|_{D(\mu)}\|\kappa\|_{E(\mu)} + 2 \int_\D \Bigg(\bigg({|f_j'| \sum_{l=1}^n|f_l'| + \big(\sum_{l=1}^n|f_l'|\big)^2 + \sum_{l=1}^n|f_l'|^2}\bigg)\Bigg(\frac{|h\kappa|}{|f|^3}\Bigg)\\
&\hspace{0.5cm} +  \Bigg(\frac{|f_j'| + \sum_{l=1}^n|f_l'|}{|f|^2}\Bigg)|h'\kappa| + \Bigg({\sum_{l=1}^n|f_l'|}\Bigg)\Bigg(\frac{|h\kappa'|}{|f|^2}\Bigg) + \frac{|h'\kappa'|}{|f|}\Bigg) \bigg(\log\frac{1}{|z|}\bigg)\, dA(z)\\
& \lesssim \left({n^2}{\delta^{-3}} + {n}{\delta^{-2}}+\delta^{-1}\right) \|h\|_{D(\mu)}\|\kappa\|_{E(\mu)}\\
& \lesssim n^2 \delta^{-3} \|h\|_{D(\mu)}\|\kappa\|_{E(\mu)}.
\end{align*}
Upon recalling $b_{jk}(z) = P\big(a_{jk}|_\T\big), \|b_{jk}\|_{\overline{D(\mu)}} \lesssim n^2 \delta^{-4}\|h\|_{D(\mu)},$
noticing
\begin{align*}
&\int_\T  b_{jk}(z) f_k(z) \overline{\kappa(z)}\frac{|dz|}{2\pi}\\
&\ \ =  2\int_\D \partial\overline{\partial}\big(b_{jk} f_k \overline{\kappa}\big) (z) \bigg(\log \frac{1}{|z|}\bigg)\,dA(z)\\
&\ \  =  2 \int_\D \Big(\overline{\partial}b_{jk}(z) \overline{\kappa}(z) + b_{jk}(z) \overline{\kappa'(z)}\Big) f_k'(z) \bigg(\log \frac{1}{|z|}\bigg)\,dA(z),
\end{align*}
and applying both (\ref{mulfdmest}) and (\ref{mulfebest}), for the second term we obtain
\begin{align*}
&\left|\int_\T \sum_{k=1}^n a_{jk}(z) f_k(z) \overline{\kappa(z)}\frac{|dz|}{2\pi}\right|\\
& \leq 2\sum_{k=1}^n \left|\int_\D\Big(\overline{\partial}b_{jk}(z) \overline{\kappa(z)} + b_{jk}(z) \overline{\kappa'(z)}\Big) f_k'(z) \bigg(\log \frac{1}{|z|}\bigg)\,dA(z)\right|\\
& \lesssim \sum_{k=1}^n \|b_{jk}\|_{\overline{D(\mu)}} \|\kappa\|_{E(\mu)}\\
& \lesssim n^3 \delta^{-4}\|h\|_{D(\mu)} \|\kappa\|_{E(\mu)},
\end{align*}
thereby reaching
$$g_j \in D(\mu)\quad\text{with}\quad
\|g_j\|_{D(\mu)} \lesssim n^3\delta^{-4}\|h\|_{D(\mu)},
$$
as desired.
\end{proof}
In proving Theorem \ref{corthmfducd}, we obtain
$$
g_j= \phi_jh+ \sum_{k=1}^n (a_{jk} - a_{kj}) f_k\in D(\mu).
$$
Here is a slightly different argument. Since we have shown
$a_{jk}|_\T \in \overline{D(\mu)},
$
Proposition \ref{mulohimdm1} implies
$
a_{jk}|_\T  f_k \in HD(\mu)$.
Note that
$\phi_j = {\overline{f_j}}{|f|^{-2}}
$
and similarly
$
\overline{f_j} h \in HD(\mu).
$
So, by using the Cauchy dual space $E(\mu)$ one can show that
$$
{\kappa}{|f|^{-2}} \in HD(\mu)\ \ \forall\ \ \kappa \in HD(\mu)\ \ \text{
and so}\ \ g_j \in D(\mu).
$$

\

\noindent \textbf{Acknowledgements.}

Part of this work was done when the author was a postdoc at Memorial University. He has had many fruitful conversations with Professor Jie Xiao. He thanks Professor Xiao for his constant encouragement and support. The author thanks the referee for pointing out Remark \ref{vonneumann}. The author also thanks the referees for their insightful comments which greatly improve the presentation of this paper.

\end{document}